\documentclass[11pt]{amsart}
\usepackage{amsmath, amssymb, amsbsy, amsfonts, amsthm, latexsym, amsopn, amstext, amsxtra, euscript, amscd, color, mathrsfs}
\usepackage[normalem]{ulem}
\usepackage{soul}
\usepackage{lscape}

%\hoffset -1.5cm
%\voffset -1cm
%\textwidth 15.5truecm
%\textheight 22.5truecm

\hoffset -2.2cm

\voffset -1cm

\textwidth 17.2truecm

\textheight 24truecm

\PassOptionsToPackage{hyphens}{url}\usepackage{hyperref}
 
 \usepackage[capbesideposition=outside,capbesidesep=quad]{floatrow}

\captionsetup{labelfont=bf, justification=justified, singlelinecheck=false, position=above}

\restylefloat{table}
\restylefloat{table}
            
\usepackage{multirow,caption}
            
\usepackage{amscd}
\usepackage{color,enumerate}

\newcommand{\RNum}[1]{\lowercase\expandafter{\romannumeral #1\relax}}

\setlength{\marginparwidth}{2.2cm}
\usepackage[colorinlistoftodos,prependcaption,textsize=tiny]{todonotes}

\newtheorem{thm}{Theorem}[section]
\newtheorem{lem}[thm]{Lemma}

\newtheorem{rmk}[thm]{Remark}

\newtheorem{thm-con}[thm]{Theorem-Conjecture}
\numberwithin{equation}{section}

\theoremstyle{definition}
\newtheorem{defn}[thm]{Definition}

\newcommand{\F}{\mathbb F} 
\newcommand{\Z}{\mathbb Z}

\begin{document}
\title[Local permutation polynomials and their companions]{Local permutation polynomials and their companions}

\author[S.U. Hasan]{Sartaj Ul Hasan}
\address{Department of Mathematics, Indian Institute of Technology Jammu, Jammu 181221, India}
\email{sartaj.hasan@iitjammu.ac.in}

\author[H. Kumar]{Hridesh Kumar}
\address{Department of Mathematics, Indian Institute of Technology Jammu, Jammu 181221, India}
\email{2021RMA2022@iitjammu.ac.in}
 \thanks{The first named author is partially supported by Core Research Grant CRG/2022/005418 from the Science and Engineering Research Board, Government of India. The second named author is supported by the Prime Minister’s Research Fellowship PMRF ID-3002900 at IIT Jammu.}

\begin{abstract}
Gutierrez and Urroz (2023) have proposed a family of local permutation polynomials over finite fields of arbitrary characteristic based on a class of symmetric subgroups without fixed points called $e$- Klenian groups. The polynomials within this family are referred to as $e$- Klenian polynomials. Furthermore, they have shown the existence of companions for the $e$- Klenian polynomials when the characteristic of the finite field is odd. Here, we present three new families of local permutation polynomials over finite fields of even characteristic. We also consider the problem of the existence of companions for the $e$- Klenian polynomials over finite fields of even characteristic. More precisely, we prove that over finite fields of even characteristic, the $0$- Klenian polynomials do not have any companions. However, for $e \geq 1$, we explicitly provide a companion for the $e$- Klenian polynomials. Moreover, we provide a companion for each of the new families of local permutation polynomials that we introduce.
\end{abstract}

\keywords{Finite fields, permutation polynomials, local permutation polynomials, Latin squares}
\subjclass[2020]{12E20, 11T06, 11T55}
\maketitle
 
\section{Introduction}

Denote, as usual, by $\F_{q}$ the finite field with $q$ elements and by $\F_q[X_1, \ldots,X_n]$ the ring of polynomials in $n$ variables $X_1, \ldots, X_n$ over $\F_q$, where $q$ is a power of a prime and $n$ is a positive integer. It is well-known~\cite{LNH_1997} that any map from $\F_q^n$ to $\F_q$ can be uniquely represented by a polynomial $f(X_1,X_2,\ldots,X_n) \in \F_q[X_1,X_2,\ldots,X_n]$ such that $\text{deg}_{X_i}(f)<q$ for all $i\in \{1,2,\ldots,n\}$, where $\text{deg}_{X_i}(f)$ denotes the degree of $f$ in the $i$-th variable $X_i$. In view of this, studying the permutation behaviour of a function from $\F_q^n$ to $\F_q$ is the same as studying the permutation behaviour of a polynomial in $n$ variables over $\F_q$ with degree less than $q$ in each variable. Thus, in what follows, instead of functions from $\F_q^n$ to $\F_q$, we shall always consider the polynomials in $n$ variables over $\F_q$ with degree less than $q$ in each variable. The degree of $f=\sum  a_{i_1,\ldots, i_n} X_1^{i_1} \cdots X_n^{i_n}$, denoted by $\deg(f)$, is defined as $\deg(f) := \max\{i_1+ \cdots +i_n \mid a_{i_1,\ldots, i_n} \neq 0\}.$ A polynomial $f\in\F_q[X_1,X_2,\ldots,X_n]$ is called a permutation polynomial (PP) if the equation $f(X_1,X_2,\ldots,X_n)=a$ has exactly $q^{n-1}$ solutions in $\F_q^{n}$ for each $a\in \F_q$. For $n=1$, the above definition coincides with the classical notion of the permutation polynomial in one variable. The classification of permutation polynomials in several variables is generally a difficult problem. In 1970, Niederreiter \cite{NH_1970} classified the permutation polynomials of several variables over $\F_q$ of degree at most two. However, the classification of permutation polynomials in several variables of degree $\geq 3$ remains an open problem.

A polynomial $f(X_1,X_2,\ldots,X_n) \in\F_q[X_1,X_2,\ldots,X_n]$ is called a local permutation polynomial (LPP) if for each $ i \in \{1, \ldots, n\}$, the equation $f(a_1,\ldots,a_{i-1},X_i,a_{i+1},\ldots,a_n)=a$ has at most one solution for all $a \in \F_q$ and $\overline{a}_i\in \F_q^{n-1}$, where $\overline{a}_i:=(a_1,\ldots,a_{i-1},a_{i+1},\ldots,a_n)$. In other words, $f(a_1,a_2,\ldots,a_{i-1},X_i,a_{i+1},\ldots,a_n)\in \F_q[X_i]$ is a permutation polynomial over $\F_q$ for each $\overline{a}_i\in \F_q^{n-1}$, where  $ i \in \{1,\ldots, n\}$. It is easy to observe that an LPP is always a permutation polynomial; however, the converse may not be true in general. For instances, $f(X_1,X_2)=X_1^2+X_2$ is a permutation polynomial, but it is not an LPP over $\F_q$ when $q$ is a power of an odd prime. LPPs were first studied by Mullen~\cite{Mullen_1980, Mullen_1_1980}, who provided necessary and sufficient conditions for polynomials in two and three variables to be LPPs over prime fields. In $2004$, Diestelkamp, Hartke and Kenney \cite{DH_2004} studied the degree of an LPP in two variables over the finite field $\F_q$ and showed that the degree is at most $2q-4$, and that this bound is sharp. Anbar, Ka\c{s}{\i}kc{\i} and Topuzo\u glu~\cite{AKT_2019} generalized the notion of LPPs and defined what is known as vectorial local permutations. A map $F: \F_q^n \rightarrow \F_q^n$ given by
\[
F(X_1,\ldots,X_n) = (f_1(X_1,\ldots,X_n), \ldots,f_n(X_1,\ldots,X_n))
\]
is called a vectorial local permutation if all the polynomials $f_i(X_1,\ldots,X_n)\in \F_q[X_1,X_2,\ldots,X_n]$, where $i \in \{1, \ldots, n\}$, are LPPs over $\F_q$, and $F$ is called a vectorial permutation if it induces a permutation of $\F_q^n$. If $F$ is a vectorial permutation, then by using the absolute irreducibility of certain curve over $\F_q$ and the number of rational points on it, the authors showed that the degrees of the components $f_1, f_2, \ldots, f_n \in \F_q[X_1, \ldots, X_n]$ are at least
 two when $2 \leq \deg(F )=d < \sqrt{q}$ and $d\mid (q-1)$, where $\deg(F ):=\max \{\deg (f_i) \mid i=1, \ldots, n\}$. 
 
The notion of LPPs is intimately related to what are called Latin squares, which we shall now describe. A Latin square of order $n$ is an $n \times n$ matrix with entries from a set $S$ of size $n$, such that every element of $S$ appears exactly once in each row and each column. While Latin squares have a long history, it was Euler who initiated their systematic study in the 18th century. The simplest examples of Latin squares are the multiplication tables (also referred to as Cayley tables) of groups. Two Latin squares are said to be orthogonal if, when superimposed, they produce all the ordered pairs of $S \times S$. A set of Latin squares of order $n$ that are pairwise orthogonal is referred to as Mutually Orthogonal Latin Squares (MOLS). A set of MOLS of order $n$ is called a complete set if it contains $n-1$ Latin squares. It is a well-known fact~\cite{Mullen_book} that there is a one-to-one correspondence between the set of Latin squares of order a prime power $q$ and the set of bivariate LPPs over the finite fields of order $q$. Likewise, an orthogonal Latin square of the Latin square associated with a particular bivariate LPP corresponds to an orthogonal LPP, also referred to as a companion, associated with that LPP. Consequently, constructing an LPP (or a companion) is essentially the same as constructing a Latin square (or an orthogonal Latin square). It is interesting to note that constructing MOLS is a challenging combinatorial problem; however, a companion of a given bivariate LPP provides an elegant algebraic method for constructing a complete set of MOLS.

Recently, Gutierrez and Urroz~\cite{JJ_2023} established a one-to-one correspondence between bivariate LPPs and a specific type of $q$-tuples of permutation polynomials in one variable over the finite fields with $q$ elements. They then used this correspondence to construct a family of bivariate LPPs based on a class of symmetric subgroups without fixed points, known as $e$- Klenian groups. We refer to the LPPs in this family as e- Klenian polynomials. Motivated by the seminal work of Gutierrez and Urroz~\cite{JJ_2023}, we construct three new families of bivariate LPPs  based on certain subgroups of the symmetric group of permutations on $q$ symbols. As alluded to earlier, two LPPs are said to be companions of each other if their corresponding Latin squares are orthogonal. Generally, finding a companion for a given LPP is a challenging task. In~\cite{JJ_2023}, the authors showed the existence of a companion for the $e$- Klenian polynomials over finite fields of odd characteristic. For even characteristic, the authors presented some computational results regarding the companions of the $e$- Klenian polynomials. Inspired by their computational results, we prove the existence of a companion for the $e$- Klenian polynomials over finite fields of even characteristic for all $e \geq 1$. However, for $e=0$, we show that $e$- Klenian polynomials over finite fields of even characteristic do not have any companion. We also provide an explicit expression of a companion for each of our families of LPPs.

The structure of the paper is as follows. In Section~\ref{S2}, we recall some definitions and lemmas that will be used in the sequel. Section~\ref{S3} presents the construction of three new families of bivariate LPPs and discusses their equivalence. We investigate in Section \ref{S4} the existence results concerning companions of $e$- Klenian polynomials over finite fields of even characteristic, as well as of each of the new families of LPPs introduced in this paper. Finally, we conclude the paper in Section~\ref{S5}.

\section{Preliminaries}\label{S2}
Let ${\mathfrak S}q$ denote the symmetric group of permutations on the $q$ elements of the finite field $\F_q$. We list the elements of $\F_q$ as $\F_{q} = \{c_0, c_1, \ldots, c_{q-1}\}$ and will use this enumeration throughout the paper. We call a $q$-tuple $(\beta_0,\ldots,\beta_{q-1})\in {\mathfrak S}_q^q$ a permutation polynomial tuple if it satisfies that for $i \neq j$, $\beta_i^{-1}\beta_j$ has no fixed point. It is easy to see that if the elements of the tuple $(\beta_0,\beta_1,\ldots,\beta_{q-1})\in {\mathfrak S}_q^q$ form a subgroup of ${\mathfrak S}_q$, then $(\beta_0,\beta_1,\ldots,\beta_{q-1})$ is a permutation polynomial tuple if and only if  no $\beta_i, 0\leq i\leq q-1$, has a fixed point, except the identity permutation. In~\cite{JJ_2023}, the authors showed that there is a one to one correspondence between the set of bivariate LPPs over $\F_q$ and the set of permutation polynomial tuples over $\F_q$. More precisely, the authors proved the following result.

\begin{lem}\cite[Lemma 7]{JJ_2023}
\label{tpl}
There is a bijective map between the set of local permutation polynomials $f\in \F_q[X,Y]$ and the set of permutation polynomial tuples $\underline{\beta}_f :=(\beta_0,\ldots,\beta_{q-1}) \in {\mathfrak S}_q^q$. Furthermore, $f$ and $\underline{\beta}_f$ are associated to each other by the following relation: for each $0 \leq i \leq q-1$, $f(x,\beta_i(x))=c_i$ for all $x\in \F_q$.
\end{lem}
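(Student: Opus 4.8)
The plan is to exhibit the correspondence explicitly in both directions and check that the two assignments are mutually inverse.

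Given an LPP $f\in\F_q[X,Y]$, the defining property forces the univariate polynomial $Y\mapsto f(a,Y)$ to be a permutation of $\F_q$ for every $a\in\F_q$ (an injective self-map of a finite set is bijective), and likewise $X\mapsto f(X,b)$ is a permutation for every $b\in\F_q$. Hence, for each index $i$ and each $x\in\F_q$, there is a unique element, call it $\beta_i(x)$, with $f(x,\beta_i(x))=c_i$; this defines a map $\beta_i\colon\F_q\to\F_q$. First I would check that $\beta_i\in\mathfrak{S}_q$: if $\beta_i(x)=\beta_i(x')=y$, then $f(x,y)=c_i=f(x',y)$, and injectivity of $X\mapsto f(X,y)$ gives $x=x'$. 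Next I would check the no-fixed-point condition: if $(\beta_i^{-1}\beta_j)(x)=x$ for some $x$ and some $i\neq j$, then $\beta_j(x)=\beta_i(x)=y$, whence $c_j=f(x,y)=c_i$, a contradiction. Thus $\underline{\beta}_f:=(\beta_0,\ldots,\beta_{q-1})$ is a permutation polynomial tuple, and it satisfies the stated relation by construction.

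For the reverse direction, given a permutation polynomial tuple $(\beta_0,\ldots,\beta_{q-1})$, the key observation is that the graphs $G_i:=\{(x,\beta_i(x)) : x\in\F_q\}$ \emph{partition} $\F_q^2$. Indeed, each $G_i$ has exactly $q$ elements, and if $(x,\beta_i(x))=(x',\beta_j(x'))$ with $i\neq j$, then $x=x'$ and $\beta_i(x)=\beta_j(x)$, i.e.\ $(\beta_i^{-1}\beta_j)(x)=x$, contradicting the hypothesis; so the $q$ sets $G_0,\ldots,G_{q-1}$ are pairwise disjoint and, by cardinality, cover $\F_q^2$. I would then define $f\colon\F_q^2\to\F_q$ by $f(x,y)=c_i$ for the unique $i$ with $(x,y)\in G_i$, and identify $f$ with its reduced polynomial of degree less than $q$ in each variable via the uniqueness statement recalled in the introduction. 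To see that $f$ is an LPP: fixing $x=a$, the values $\beta_0(a),\ldots,\beta_{q-1}(a)$ are pairwise distinct (again by the no-fixed-point condition), so they exhaust $\F_q$, and $y\mapsto f(a,y)$ sends $\beta_i(a)\mapsto c_i$, a bijection; fixing $y=b$, one has $f(x,b)=c_i$ if and only if $\beta_i(x)=b$, i.e.\ $x=\beta_i^{-1}(b)$, so $X\mapsto f(X,b)$ is a bijection as well.

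Finally I would verify that the two assignments invert each other: starting from $f$, forming $\underline{\beta}_f$, and rebuilding $f'$ yields $f'(x,y)=c_i$ exactly when $y=\beta_i(x)$, i.e.\ exactly when $f(x,y)=c_i$, so $f'=f$; and starting from a tuple, building $f$, and extracting $\beta_i'$ returns the unique $y$ with $(x,y)\in G_i$, which is $\beta_i(x)$. The argument is essentially bookkeeping; the only real content — and the step I would take most care over — is the partition/counting argument showing that the no-fixed-point condition is precisely what makes the graphs $G_i$ tile $\F_q^2$ and forces each axis-parallel line in $\F_q^2$ to meet every value $c_i$ exactly once.
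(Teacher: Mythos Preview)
The paper does not actually prove this lemma: it is quoted from \cite[Lemma~7]{JJ_2023} and stated without proof, so there is no in-paper argument to compare against. Your proof is correct and is the natural one --- build $\beta_i$ from the level sets of $f$, use the no-fixed-point condition to see that the graphs of the $\beta_i$ partition $\F_q^2$, and check that both directions are mutually inverse --- and is almost certainly the same argument given in the cited reference.
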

If the elements of the permutation polynomial tuple $\underline{\beta}_f$ form a subgroup of ${\mathfrak S}_q$, then the corresponding LPP $f$ is referred to as a permutation group polynomial. 

In the study of bivariate LPPs, we classify them with respect to certain equivalence relations. Two LPPs $f,g \in \F_q[X,Y]$ are said to be equivalent if their corresponding permutation polynomial tuples 
$(\beta_0,\ldots,\beta_{q-1})$ and $(\gamma_0,\ldots,\gamma_{q-1}) \in {\mathfrak S}_q^q$ satisfy the following relation:
\[
\gamma_i= \sigma \circ \beta_i \circ \lambda,~\mbox{for all}~i=0,1, \ldots, q-1,
\]
where $\sigma, \lambda \in {\mathfrak S}_q$. 

To the best of our knowledge, the only known class of permutation group polynomials, referred to as the $e$- Klenian polynomials, is due to Gutierrez and Urroz~\cite{JJ_2023} and is described in the following lemma.
\begin{lem}\cite[Corollary 17]{JJ_2023}
\label{JJ_tpl}
Let $q=p^r$, where $p$ is a prime number and $r$ is a positive integer. Let $0 \leq e < r$, $\ell =p^e$ and  $t=\frac{q}{\ell}$. Let $\alpha=C_{0,\alpha} \cdots C_{t-1,\alpha}$, where
\[
C_{i,\alpha}=(c_{i\ell},c_{i\ell+1}, \ldots, c_{(i+1)\ell-1})~\mbox{for all}~0\leq i \leq t-1
\]
and $\beta=C_{0,\beta} \cdots C_{\ell-1,\beta}$, where
\[
C_{j,\beta}=(c_{j},c_{j+\ell}, \ldots, c_{j+(t-1)\ell})~\mbox{for all}~0\leq j \leq \ell-1.
\]
Then the set defined by $G=\{\alpha^{i}\beta^{j}: 0\leq i \leq \ell-1, 0\leq j \leq t-1 \}$ is a subgroup of ${\mathfrak S}_q$ such that each element of $G$, except the identity, has no fixed point and $|G|=q$. 
\end{lem}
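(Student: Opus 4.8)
The plan is to introduce a coordinate system on $\F_q$ that simultaneously ``diagonalises'' the two generators. Every index $k\in\{0,1,\dots,q-1\}$ has a unique representation $k=i\ell+s$ with $0\le s\le \ell-1$ and $0\le i\le t-1$, so the assignment $c_k=c_{i\ell+s}\mapsto(s,i)$ is a bijection $\F_q\leftrightarrow \Z/\ell\times\Z/t$ (recall the enumeration $c_0,\dots,c_{q-1}$ is fixed once and for all). I would then read off from the cycle descriptions that, in these coordinates, $\alpha$ is the map $(s,i)\mapsto(s+1\bmod \ell,\ i)$ and $\beta$ is the map $(s,i)\mapsto(s,\ i+1\bmod t)$: indeed $C_{i,\alpha}$ is precisely the set of points with fixed second coordinate $i$, cyclically advancing the first coordinate, while $C_{j,\beta}$ is the set of points with fixed first coordinate $j$, cyclically advancing the second.

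From this description everything follows by elementary facts about $\Z/\ell\times\Z/t$ acting on itself by translation. First, $\alpha$ and $\beta$ commute, since translations in different coordinates commute, and $\alpha$ has order $\ell$, $\beta$ has order $t$. Second, in coordinates $\alpha^{i}\beta^{j}$ is the translation $(s,m)\mapsto(s+i,\ m+j)$, so the $q$ elements $\alpha^{i}\beta^{j}$ with $0\le i\le\ell-1$, $0\le j\le t-1$ are pairwise distinct and exhaust the full translation group; hence $|G|=\ell t=q$. Third, $G$ is closed under composition and inversion because $(\alpha^{i}\beta^{j})(\alpha^{i'}\beta^{j'})=\alpha^{i+i'}\beta^{j+j'}$ and the exponents may be reduced modulo $\ell$ and modulo $t$ respectively, so $G$ is a subgroup of ${\mathfrak S}_q$ (in fact $G\cong\Z/\ell\times\Z/t$ realised as its own regular representation).

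Finally, for the fixed-point-free property, note that $\alpha^{i}\beta^{j}$ sends the point with coordinates $(s,m)$ to $(s+i\bmod\ell,\ m+j\bmod t)$; this fixes $(s,m)$ for some (equivalently, for every) point if and only if $i\equiv 0\pmod\ell$ and $j\equiv 0\pmod t$, i.e.\ if and only if $\alpha^{i}\beta^{j}$ is the identity. Hence no nontrivial element of $G$ has a fixed point.

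I do not expect a real obstacle here: the only step needing care is the index bookkeeping that turns the two cycle decompositions into the coordinate picture, after which the argument is just the statement that the regular action of a finite abelian group on itself is free. If one prefers to avoid coordinates, the same proof can be phrased by verifying directly from the cycle forms that $\alpha\beta=\beta\alpha$, that $\alpha^{i}$ for $0<i<\ell$ moves every point within its $\alpha$-cycle, and that $\beta^{j}$ for $0<j<t$ moves every point within its $\beta$-cycle, so that $\alpha^{i}\beta^{j}$ can fix a point only when $i=j=0$; but the coordinate formulation makes both the count $|G|=q$ and the freeness transparent.
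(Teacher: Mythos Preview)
Your argument is correct. The coordinate identification $c_{i\ell+s}\leftrightarrow(s,i)\in\Z/\ell\times\Z/t$ is exactly the right device: it turns $\alpha$ and $\beta$ into the two coordinate translations, after which commutativity, the order count $|G|=\ell t=q$, the subgroup property, and the fixed-point-freeness of nontrivial elements are all immediate from the fact that a group acting on itself by translation acts freely and transitively.

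There is nothing to compare against in this paper: Lemma~\ref{JJ_tpl} is quoted from \cite[Corollary~17]{JJ_2023} and is not reproved here. Your proof is a perfectly standard and self-contained justification of the cited result, and the only care needed is the index bookkeeping you already flagged.
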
 
Two Latin squares $L_1=(a_{ij})_{0 \leq i,j \leq q-1}$ and $L_2=(b_{ij})_{0 \leq i,j \leq q-1}$ of order $q$ are said to be orthogonal if all the $q^2$ ordered pairs $(a_{ij},b_{ij}), 0 \leq i,j \leq q-1 $  are distinct. Let $f_1$ and  $f_2 \in \F_q[X,Y]$ be the LPPs corresponding to $L_1$ and $L_2$, respectively. Then $L_1$ and $L_2$ are orthogonal if and only if the following system of equations
\begin{equation*}
    \begin{cases}
        f_1(X,Y)&=a\\
        f_2(X,Y)&=b
    \end{cases}
\end{equation*}
has a unique solution $(x,y) \in \F_q \times \F_q$ for all $ (a,b) \in \F_q \times \F_q$. In this case, we call the LPPs $f_1$ and $f_2$ to be orthogonal (or companion) of each other~\cite{NH_1971}.  

The notion of a companion is not only defined for bivariate LPPs; it has also been defined for bivariate permutation polynomials~\cite{NH_1971}. For a given bivariate permutation polynomial over $\F_q$, there are exactly $\prod_{i=1}^q i^q$ companions, which are themselves permutation polynomials. However, the existence of a companion which is also an LPP for a given bivariate LPP is not guaranteed, as we shall see in Section~\ref{S4}. Note that, throughout this paper, by ``companion" of a given LPP, we always mean an LPP that is orthogonal to the given LPP.

For any positive integer $k\leq q$, a $k$-plex of a Latin square $L$ of order $q$ is a selection of $kq$ entries from $L$ such that exactly $k$ entries are chosen from each row and each column. Moreover, each symbol is chosen exactly $k$ times. A $1$-plex is known as transversal. It is well known~\cite{EulerT, KD_2015}  that if $n$ is even then the addition table for the integers modulo $n$, possesses no transversal.

\section{Permutation group polynomials}\label{S3}
One may recall that the bivariate LPP over $\F_q$ corresponding to a permutation polynomial tuple $(\beta_0,\ldots, \beta_{q-1}) \in {\mathfrak S}_q^q$ is called a permutation group polynomial if and only if the set $\{\beta_0,\ldots,\beta_{q-1}\}$ forms a subgroup of ${\mathfrak S}_q$. Thus, finding new permutation group polynomials is equivalent to finding permutation polynomial tuples $(\beta_0,\ldots, \beta_{q-1})$ such that the set $\{\beta_0,\ldots,\beta_{q-1}\}$ forms a subgroup of ${\mathfrak S}_q$. For the reader's convenience, boldface letters in the permutation cycles appearing in the following, as well as in the subsequent theorems are used to indicate that they do not follow any specific pattern. The theorem below introduces the first family of permutation group polynomials. 

\begin{thm}\label{T31}
Let $q=4k$ be a positive integer, where $k=2^{\ell}$ for some positive integer $\ell \geq 2$. Also, let
\[
a=(c_0,c_1)(c_2,c_3)\cdots(c_{4k-2},c_{4k-1}),
\]
and
\begin{equation*}
    \begin{split}
        b=& (c_0, c_{2k-2}, \ldots, c_{2m}, c_{2(2m+k-1)}, \ldots , c_{k-2}, c_{4k-6},c_k,c_{2k}, \ldots, c_{2n+k},c_{2(2n+k)}, \ldots, c_{2k-4}, c_{4k-8}, \\
        & {\bf c_{4k-1}, c_{4k-4}}) (c_1, c_{2k+1}, \ldots, c_{2m+1}, c_{2(2m+k)+1}, \ldots , c_{k-1}, c_{4k-3},c_{k+1},c_{2k-1}, \ldots, c_{2n+k+1},\\
        & c_{2(2n+k)-1}, \ldots, c_{2k-3}, c_{4k-9}, {\bf c_{4k-2}, c_{4k-5}}),
    \end{split}
\end{equation*}
where $m \in \{0,1,\ldots, \frac{k-2}{2}\}$ and $n \in \{0,1,\ldots, \frac{k-4}{2}\}$.
Then $G_1=\{b^{j}a^{i}: 0 \leq j \leq 2k-1=\frac{q}{2}-1, 0\leq i \leq 1\}$ is a subgroup of ${\mathfrak S}_q$ in which  no permutation has fixed points except the identity and $|G_1|=q$.
\end{thm}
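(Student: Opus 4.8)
The plan is to realise $G_1$ as the group $\langle a,b\rangle$ and to deduce all three assertions from the cycle structures of $a$ and $b$. First I would record the elementary facts: $a$ is a fixed-point-free involution, being a product of $2k$ disjoint transpositions that together cover $\F_q$; and $b$ is a product of two disjoint cycles, each of length $2k$, so $b$ has order exactly $2k$ and $\langle b\rangle$ acts on $\F_q$ with exactly two orbits, each of size $2k$. The only real work at this stage is the bookkeeping that confirms the two long cycles displayed for $b$ are legitimate: one must check that the index sequence in the first cycle consists of $2k-1$ distinct even indices together with the single odd index $4k-1$, that the second consists of $2k-1$ distinct odd indices together with the single even index $4k-2$, and that jointly they exhaust $\{0,1,\dots,4k-1\}$. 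This is where the ranges of $m$ and $n$ and the irregular boldface tail entries must be tracked, splitting each cycle into its patterned segments.

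Next I would show that $G_1$ is a subgroup of order $q$. Since $G_1\subseteq{\mathfrak S}_q$ is finite and nonempty, it suffices to check closure under composition; and because $a^2=\mathrm{id}$ and $\langle b\rangle=\{b^{j}:0\le j\le 2k-1\}$, a short manipulation of a product $b^{j}a^{i}\cdot b^{j'}a^{i'}$ shows that closure is equivalent to the single statement $aba^{-1}\in\langle b\rangle$, i.e.\ that $a$ normalises $\langle b\rangle$. Conjugating $b$ by $a$ amounts to replacing each symbol $c_m$ in the cycle notation of $b$ by $a(c_m)$, that is, interchanging $c_{2i}\leftrightarrow c_{2i+1}$ throughout; one then checks that the permutation so obtained is a power of $b$. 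Granting this, $G_1=\langle b\rangle\cup\langle b\rangle a$, and the $4k$ elements $b^{j}a^{i}$ with $0\le j\le 2k-1$, $0\le i\le1$ are pairwise distinct: the $b^{j}$ are distinct because $b$ has order $2k$, and no $b^{j}$ can equal $b^{j'}a$ because every power of $b$ maps each of the two $\langle b\rangle$-orbits to itself whereas $a$ interchanges them (indeed $a$ carries the $b$-cycle through $c_0$ to the $b$-cycle through $c_1$). Hence $|G_1|=4k=q$.

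Finally, for the fixed-point-free property I would argue by regularity rather than by examining the coset $\langle b\rangle a$ element by element. As just noted, $\langle b\rangle$ has two orbits on $\F_q$ and $a$ interchanges them, so $G_1=\langle a,b\rangle$ has a single orbit, i.e.\ acts transitively on $\F_q$. Since $|G_1|=q$ equals the number of points, the orbit--stabiliser theorem forces every point-stabiliser to be trivial; equivalently, every non-identity element of $G_1$ fixes no point. This yields the theorem.

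The genuine obstacle is the pair of bookkeeping verifications flagged above: that the displayed data really define a product of two disjoint $2k$-cycles, and that $aba^{-1}$ is a power of $b$. Because the boldface exceptional entries break the otherwise uniform index patterns, neither can be dispatched by a single closed-form index identity; each must be confirmed piecewise on the patterned segments. Everything else --- the group axioms, the counting, transitivity, and the passage from transitivity to fixed-point-freeness --- is short and structural.
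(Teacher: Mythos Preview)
Your proposal is correct, and for the closure step it coincides with the paper's approach: the paper verifies the explicit relation $ab=b^{k+1}a$ by a case analysis on $c_t$ according to the parity and range of $t$, which is exactly your ``$aba^{-1}\in\langle b\rangle$'' verified piecewise on the patterned segments.

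Where you genuinely diverge is in the last two steps. For the distinctness of the $4k$ listed elements, the paper assumes $b^{j}=b^{s}a$ and feeds this into $ab=b^{k+1}a$ to force $b^{k}=I$, contradicting $|b|=2k$; you instead observe that every power of $b$ stabilises each of the two $\langle b\rangle$-orbits setwise while $a$ interchanges them (since $a(c_0)=c_1$ lies in the other cycle), so no $b^{j}$ can equal any $b^{j'}a$. For fixed-point-freeness, the paper checks directly that $b^{j}a(c_t)=c_t$ is impossible by analysing the index shifts in the cycles of $b$; you instead note that the orbit-swapping property of $a$ makes the action of $G_1$ transitive, and then invoke the orbit--stabiliser theorem on a group of order $q$ acting on $q$ points to conclude that every stabiliser is trivial. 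Your route packages these two steps more structurally and avoids a second round of index bookkeeping; the paper's route is more self-contained and makes the exponent $k+1$ explicit, which it reuses later when counting elements of order two in $G_1$. Both are valid, and both leave the same genuine computational burden at the normaliser check, as you correctly flag.
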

\begin{proof}
Recall that $G_1$ is a subgroup of ${\mathfrak S}_q$ if and only if for any two elements $b^{j_1}a^{i_1}, b^{j_2}a^{i_2} \in G_1$, the element $b^{j_1}a^{i_1}(b^{j_2}a^{i_2})^{-1} \in G_1$. Notice that
\begin{equation*}
    \begin{split}
        b^{j_1}a^{i_1}(b^{j_2}a^{i_2})^{-1} &= b^{j_1}a^{i_1}(a^{i_2})^{-1}(b^{j_2})^{-1}\\
        &= b^{j_1}a^{i_1}(a^{-1})^{i_2}(b^{-1})^{j_2}\\
        &= b^{j_1}a^{i_1}a^{i_2}(b^{2k-1})^{j_2}~\quad \quad(\mbox{as}~a^{-1}=a)\\
        &= b^{j_1}a^{i_1+i_2}b^{(2k-1)j_2}.
    \end{split}
\end{equation*}
If $i_1=i_2$ then $a^{i_1+i_2}= I$, the identity element of ${\mathfrak S}_q$. Therefore $b^{j_1}a^{i_1}(b^{j_2}a^{i_2})^{-1} \in G_1$ in this case. Now, when $i_1 \neq i_2$ then we have two cases, either $(i_1, i_2)=(1,0)$ or $(i_1, i_2)=(0,1)$. In both the cases, we have $a^{i_1+i_2}= a$ and hence 
\[
 b^{j_1}a^{i_1}(b^{j_2}a^{i_2})^{-1}= b^{j_1}ab^{(2k-1)j_2}.
\]
In order to show that $b^{j_1}ab^{(2k-1)j_2}$ is of the form $b^{j_3}a^{i_3}$, for some $0 \leq j_3 \leq 2k-1=\frac{q}{2}-1, 0\leq i_3 \leq 1$, we shall use the identity $ab=b^{k+1}a$. As in this case
\[
b^{j_1}b^{(2k-1)j_2 (k+1)}a = b^{(k-1)j_2+j_1}a \in G_1.
\]
Now, it only remains to show that $ab=b^{k+1}a$. We observe that
\begin{equation*}
    \begin{split}
        b^{k+1}=& (c_0, c_{2k}, \ldots, c_{2m}, c_{2(2m+k)}, \ldots , c_{k-2},  c_{4k-4},c_k,c_{2k-2}, \ldots, c_{2n+k},c_{2(2n+k-1)}, \ldots, c_{2k-4},\\
        & c_{4k-10}, {\bf c_{4k-1}, c_{4k-6}}) (c_1, c_{2k-1}, \ldots, c_{2m+1}, c_{2(2m+k)-1}, \ldots , c_{k-1}, c_{4k-5},c_{k+1},c_{2k+1}, \ldots, \\
        & c_{2n+k+1},c_{2(2n+k)+1}, \ldots, c_{2k-3}, c_{4k-7}, {\bf c_{4k-2}, c_{4k-3}}).
    \end{split}
\end{equation*}
Let $c_t$ be an arbitrary element of $\F_q$. We shall divide our proof into two parts depending on the parity of $t$, namely, $t$ even and $t$ odd, respectively. \\

 \textbf{Case 1.} Let $t$ be an even positive integer. Then $t=2r$, where $0 \leq r \leq 2k-1$. Our aim is to show that $ab(c_{2r})= b^{k+1}a(c_{2r})$ for all $ r \in \{0, \ldots, 2k-1 \}$. It is easy to observe that when $\displaystyle   0 \leq r \leq \frac{k-2}{2}$ then 
\[
ab(c_{2r})= a(c_{2(2r+k-1)}) = c_{2(2r+k)-1}
\]
and
\[
b^{k+1}a(c_{2r}) = b^{k+1}(c_{2r+1}) = c_{2(2r+k)-1}.
\]
Similarly, when $\displaystyle  \frac{k}{2}\leq r \leq  k-2$ then we have
\[
ab(c_{2r})= a(c_{4r}) = c_{4r+1}
\]
and
\[
b^{k+1}a(c_{2r}) = b^{k+1}(c_{2r+1}) = c_{4r+1}.
\]
Thus, for all even $0 \leq t \leq  2(k-2)$, we have $ab(c_t)=b^{k+1}a(c_t)$. For $2(k-1) \leq t \leq  4k-2$, we divide the proof into the following three subcases:

\textbf{Subcase: 1.1.} Let $t=2(2s+k-1)$, where $0 \leq s \leq \frac{k-2}{2}$. In this case
\[
ab(c_{2(2s+k-1)})= a(c_{2s+2}) = c_{2s+3}
\]
and
\[
b^{k+1}a(c_{2(2s+k-1)}) = b^{k+1}(c_{2(2s+k)-1}) = c_{2s+3}.
\]

\textbf{Subcase: 1.2.} Suppose $t=2(2s+k)$, where $0 \leq s \leq \frac{k-6}{2}$. Then, we have
\[
ab(c_{2(2s+k)})= a(c_{2s+k+2}) = c_{2s+k+3}
\]
and
\[
b^{k+1}a(c_{2(2s+k)}) = b^{k+1}(c_{2(2s+k)+1}) = c_{2s+k+3}.
\]

\textbf{Subcase: 1.3.} We shall now verify the identity for the remaining elements of $\F_q$ with even indices, i.e., $t \in \{4k-8, 4k-4, 4k-2 \}$. It can be easily verified that
\begin{equation*}
    \begin{split}
        &ab(c_{4k-8}) = a(c_{4k-1}) = c_{4k-2} =b^{k+1}(c_{4k-7}) =b^{k+1}a(c_{4k-8}),\\
        &ab(c_{4k-4}) = a(c_0) = c_1 =b^{k+1}(c_{4k-3}) =b^{k+1}a(c_{4k-4}),\\
    \end{split}
\end{equation*}
and
\[
ab(c_{4k-2}) = a(c_{4k-5}) = c_{4k-6} =b^{k+1}(c_{4k-1}) =b^{k+1}a(c_{4k-2}).
\]
Thus, $ab(c_{2r})= b^{k+1}a(c_{2r})$ for all $ r \in \{0, \ldots, 2k-1 \}$.

\textbf{Case 2.} Let $t$ be an odd positive integer. Then $t=2r+1$, where $0 \leq r \leq 2k-1$. Here, we want to show that $ab(c_{2r+1})= b^{k+1}a(c_{2r+1})$ for all $ r \in \{0, \ldots, 2k-1 \}$. It is easy to observe that when $\displaystyle   0 \leq r \leq \frac{k-2}{2}$ then 
\[
ab(c_{2r+1})= a(c_{2(2r+k)+1)}) = c_{2(2r+k)}
\]
and
\[
b^{k+1}a(c_{2r+1}) = b^{k+1}(c_{2r}) = c_{2(2r+k)}.
\]
Similarly, if $\displaystyle  \frac{k}{2}\leq r \leq  k-2$, we have
\[
ab(c_{2r+1})= a(c_{4r-1}) = c_{4r-2}
\]
and
\[
b^{k+1}a(c_{2r+1}) = b^{k+1}(c_{2r}) = c_{4r-2}.
\]
Thus, for all odd $0 \leq t \leq  2k-3$, we have $ab(c_t)=b^{k+1}a(c_t)$. For odd $2k-1 \leq t \leq  4k-1$, we shall divide the proof into the following three subcases:

\textbf{Subcase: 2.1.} Let $t=2(2s+k)-1$, where $0 \leq s \leq \frac{k-6}{2}$. We have
\[
ab(c_{2(2s+k)-1})= a(c_{2s+k+3}) = c_{2s+k+2}
\]
and
\[
b^{k+1}a(c_{2(2s+k)-1}) = b^{k+1}(c_{2(2s+k)-2}) = c_{2s+k+2}.
\]

\textbf{Subcase: 2.2.} Let $t=2(2s+k)+1$, where $0 \leq s \leq \frac{k-2}{2}$. In this case
\[
ab(c_{2(2s+k)+1})= a(c_{2s+3}) = c_{2s+2}
\]
and
\[
b^{k+1}a(c_{2(2s+k)+1}) = b^{k+1}(c_{2(2s+k)}) = c_{2s+2}.
\]

\textbf{Subcase: 2.3.} We verify the identity below for the remaining elements of $\F_q$, i.e., $t \in \{4k-9, 4k-5, 4k-1 \}$.
\begin{equation*}
    \begin{split}
        &ab(c_{4k-9}) = a(c_{4k-2}) = c_{4k-1} =b^{k+1}(c_{4k-10}) =b^{k+1}a(c_{4k-9}),\\
        &ab(c_{4k-5}) = a(c_1) = c_0 =b^{k+1}(c_{4k-6}) =b^{k+1}a(c_{4k-5}),\\
    \end{split}
\end{equation*}
and
\[
ab(c_{4k-1}) = a(c_{4k-4}) = c_{4k-3} =b^{k+1}(c_{4k-2}) =b^{k+1}a(c_{4k-1}).
\]
Thus, $ab(c_t)= b^{k+1}a(c_t)$ for all $ t \in \{0, \ldots, 4k-1 \}$. Therefore, $G_1$ is a subgroup of the symmetric group ${\mathfrak S}_q$.

Next, we prove that the subgroup $G_1$ has exactly $q$ elements. This is equivalent to show that all the elements $b^ja^i$, where $j \in \{0, \ldots, 2k-1\}$ and $i \in \{0,1\}$, are distinct. Notice that $G_1= \{b^j, b^ja \mid  0 \leq j \leq 2k-1\}$. Since order of the element $b$ is $2k$ therefore all the elements $b^j$'s where $j \in \{0, \ldots, 2k-1\}$ are different. Similarly, all the elements of the form $b^ja$'s where $j \in \{0, \ldots, 2k-1\}$ are also different. Therefore, it only remains to show that $b^j \neq b^sa$ for any $j,s \in \{0, \ldots, 2k-1\}$. On the contrary, we assume that $b^{j} = b^{s}a$ for some $0 \leq j, s \leq 2k-1$ which implies  $a= b^{j-s}$. Since $ab =b^{k+1}a$, we have $b^{j-s}b=b^{k+1}b^{j-s} \implies b^{k}=I$ which leads to a contradiction as $|b|=2k$.  Consequently, $G_1$ has exactly $q$ elements, i.e, $|G_1|=q$.

We shall now show that every non-identity element of $G_1= \{b^j, b^ja \mid  0 \leq j \leq 2k-1\}$ does not have any fixed element. Since the permutation $b$ consists of two disjoint cycles each of length $2k$, the permutations $b^j$, $0 \leq j \leq 2k-1$, does not have any fixed element. Next, we consider the permutations of the form $b^ja$, where $ 0 \leq j \leq 2k-1$. The case $j=0$ is trivial. Now, let $1 \leq j \leq 2k-1$ be an integer and $c_t$ be an element of $\F_q$. We consider two cases, namely, $t$ is even and $t$ is odd.  We first assume that there exists $j \in \{0,1,\ldots,2k-1\}$ such that $b^{j}a(c_{t})=c_{t}$ for some $t$ even, then we have $b^j(c_{t+1}) = c_{t}$, which is not possible as there is no odd $r \in \{0,1,\ldots,q-1\}$ such that $b^j(c_{r}) = c_{r-1}$ for any $j$. Similarly, one can  prove that  $b^ja(c_t)\neq c_t$ when $t$ is odd, which completes the proof.
\end{proof}

The following theorem gives our second family of permutation group polynomials.
\begin{thm}\label{T32}
Let $q=2^m$, where $m\geq 3$ is a positive integer. Also, let
\[
a=(c_0,c_1)(c_2,c_3)\cdots(c_{q-2},c_{q-1}),
\]
and
\begin{equation*}
    \begin{split}
        b= &(c_{\frac{q}{2}-2},\ldots,c_{\frac{q}{2}-2(m+1)}, \ldots,c_{2},c_{0},  c_{\frac{q}{2}},\ldots, c_{\frac{q}{2}+2m},\ldots,c_{q-4},c_{q-2})\\
        &(c_{1},c_{3},\ldots, c_{2m+1}, \ldots,c_{\frac{q}{2}-1},c_{q-1},c_{q-3},\ldots, c_{q-(2m+1)}, \ldots,c_{\frac{q}{2}+1}),
    \end{split}
\end{equation*}
where $0 \leq m \leq \frac{q-4}{4}$, be permutations of $\F_q$. Then the set $G_2=\{b^{j}a^{i}: 0 \leq j \leq \frac{q}{2}-1,0\leq i \leq 1 \}$ is a subgroup of ${\mathfrak S}_q$ of order $q$ and none of its elements, except the identity, has a fixed point. 
\end{thm}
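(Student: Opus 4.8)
The plan is to mimic the three-part structure of the proof of Theorem~\ref{T31}: isolate a single commutation identity between $a$ and $b$, use it to prove closure, then count the elements and check fixed points. Since $a^2=I$, for any $b^{j_1}a^{i_1},\,b^{j_2}a^{i_2}\in G_2$ one has
\[
b^{j_1}a^{i_1}\bigl(b^{j_2}a^{i_2}\bigr)^{-1}=b^{j_1}a^{i_1}a^{i_2}b^{-j_2}=b^{j_1}a^{i_1+i_2}b^{-j_2},
\]
which already lies in $G_2$ when $i_1=i_2$ (then $a^{i_1+i_2}=I$) and equals $b^{j_1}ab^{-j_2}$ when $i_1\neq i_2$. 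So the whole argument hinges on the single identity
\[
ab=b^{\frac{q}{2}-1}a\qquad\Longleftrightarrow\qquad aba^{-1}=b^{-1}.
\]
Granting it, $ab^{-k}=b^{k}a$ for every $k$, hence $b^{j_1}ab^{-j_2}=b^{\,j_1+j_2}a\in G_2$, so $G_2$ is closed under products and inverses and is therefore a subgroup of ${\mathfrak S}_q$. Conceptually this says $\langle a,b\rangle$ is the dihedral group of order $q$, realized inside ${\mathfrak S}_q$ with $a$ inverting the cyclic part $\langle b\rangle$.

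To verify $ab=b^{-1}a$ I would first unwind the cycle notation for $b$: its first cycle runs through the even-indexed elements as $c_{q/2-2}\to c_{q/2-4}\to\cdots\to c_{2}\to c_{0}\to c_{q/2}\to c_{q/2+2}\to\cdots\to c_{q-2}\to c_{q/2-2}$, and its second cycle runs through the odd-indexed elements as $c_{1}\to c_{3}\to\cdots\to c_{q/2-1}\to c_{q-1}\to c_{q-3}\to\cdots\to c_{q/2+1}\to c_{1}$; in particular $b$ preserves the parity of the index while $a$ interchanges $c_{2r}$ and $c_{2r+1}$. One then checks $ab(c_t)=b^{-1}a(c_t)$ for all $t$ by distinguishing $t$ even from $t$ odd and, within each parity, separating the "interior" indices of the two arcs from the four boundary indices on that side ($0,\ q/2-2,\ q/2,\ q-2$ for even $t$, and $1,\ q/2-1,\ q/2+1,\ q-1$ for odd $t$), exactly in the spirit of the Subcases of the proof of Theorem~\ref{T31}. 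I expect this index bookkeeping to be the main obstacle: it is routine but lengthy, and a single off-by-two in locating a predecessor in a cycle would break it.

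For the order and the fixed-point claim, note that $b$ is a product of two disjoint cycles of length $q/2$, so $|b|=q/2$ and each $b^{j}$ with $1\le j\le q/2-1$ is fixed-point-free, since a nontrivial power of a single $q/2$-cycle has no fixed point. Hence the $b^{j}$ ($0\le j\le q/2-1$) are pairwise distinct, and so are the $b^{j}a$; and if $b^{j}=b^{s}a$ then $a=b^{j-s}\in\langle b\rangle$, which forces $aba^{-1}=b$, contradicting $aba^{-1}=b^{-1}$ because $|b|=2^{m-1}\ge 4$. Therefore $G_2$ has exactly $q$ elements. For the fixed points it then suffices to combine the observation above for the $b^{j}$ ($j\neq 0$) with a parity argument: $a$ sends even-indexed elements to odd-indexed ones and vice versa, while every $b^{j}$ preserves parity, so each $b^{j}a$ reverses parity and hence fixes no $c_t$. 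Thus no non-identity element of $G_2$ has a fixed point.
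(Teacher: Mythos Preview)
Your proposal is correct and follows essentially the same route as the paper: reduce closure to the single identity $ab=b^{-1}a$, verify it by a parity-split case check on the index $t$, recognize $G_2$ as the dihedral group of order $q$, and then handle fixed points separately for $b^{j}$ and $b^{j}a$. Your parity argument for the fixed-point-freeness of $b^{j}a$ (namely that $a$ reverses index parity while every $b^{j}$ preserves it) is in fact a slightly cleaner formulation of exactly the observation the paper uses.
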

\begin{proof} Recall that, in order to show that $G_2$ is a subgroup of ${\mathfrak S}_q$, it is sufficient to show that $ab=b^{-1}a$.  Our aim is to show that $ab(c_t)=b^{-1}a(c_t)$ for all $t \in \{0,\ldots, q-1\}$.  Let $c_t \in \F_q$ be an element. We shall consider two cases, namely, $t$ even and $t$ odd.

\textbf{Case 1:} Let $0\leq t \leq q-2$ be an even integer, i.e., $t=2i$ where $0 \leq i \leq \frac{q-2}{2}$. Notice that when $1\leq i \leq \frac{q-4}{4}$ then 
\[
ab(c_{2i})= a(c_{2(i-1)}) =c_{2i-1}=b^{-1}(c_{2i+1})=b^{-1}a(c_{2i}).
\]
Similarly, when $\frac{q}{4} \leq i \leq \frac{q-4}{2}$ then 
\[
ab(c_{2i})= a(c_{2(i+1)}) =c_{2i+3}=b^{-1}(c_{2i+1})=b^{-1}a(c_{2i}).
\]
One can easily verify that
\[
ab(c_0)= a(c_{\frac{q}{2}}) = c_{\frac{q+2}{2}} = b^{-1}(c_1)=b^{-1}a(c_0) 
\]
and
\[
ab(c_{q-2})= a(c_{\frac{q-4}{2}}) = c_{\frac{q-2}{2}} = b^{-1}(c_{q-1})=b^{-1}a(c_{q-2}).
\]
Thus, $ab(c_t)=b^{-1}a(c_t)$ for all even $t \in \{0,\ldots, q-1\}$.

\textbf{Case 2:} Let $1\leq t \leq q-1$ be an odd integer. The proof follows along the similar lines as in the Case~1.

The above two cases imply $ab=b^{-1}a$. It is easy to see that $|a|=2, |b|=\frac{q}{2}$, so it is a Dihedral group of order $2\times \frac{q}{2}=q$. Thus, it only remains to show that  every non-identity element of $G_2$ has no fixed points. The permutation of the form $b^j$, where $ j\in \{0,1,\ldots,\frac{q}{2}-1\}$, does not fix any element of $\F_q$ by the similar argument used in Theorem \ref{T31}. Now consider the permutation of the form $b^ja$, where $ j\in \{0,1,\ldots,\frac{q}{2}-1\}$. In this case, we have $$b^ja(c_t)=b^j(c_{t\pm1}) \neq c_t$$ 
as the map $b^j$ does not send the element $c_{t\pm1}$ to $c_t$ for any $t \in \{0, 1, \ldots, q-1\}$, which completes the proof.
\end{proof}
The next theorem introduces the third family of permutation group polynomials.
\begin{thm}\label{T33}
Let $q=4k$ be a positive integer, where $k=2^{\ell}$ for some positive integer $\ell \geq 2$. Also, let 
\[
a=(c_0,c_1)(c_2,c_3)\cdots(c_{q-2},c_{q-1})
\]
and
\begin{equation*}
    \begin{split}
        b=& (c_0, c_{2k+1}, \ldots, c_{2i}, c_{2i+2k+1}, \ldots , c_{k-2}, c_{3k-1},{\bf c_{k+1},c_{3k}},c_{k+2},c_{3k+2}, \ldots, c_{2(j+1)+k},c_{2(j+1)+3k},\\
        & \ldots, c_{2k-2}, c_{4k-2}) (c_{3k-2},c_{2k-1}, \ldots, c_{3k-2-2j}, c_{2k-1-2j}, \ldots ,c_{2k+2},c_{k+3},{\bf c_{2k},c_{k},} c_{4k-1}, c_{k-1}, \\
        & \ldots, c_{4k-1-2i},c_{k-1-2i}, \ldots, c_{3k+1},c_{1}), 
    \end{split}
\end{equation*}
where $i \in \{0,1,\ldots,\frac{k-2}{2}\}$ and $j \in \{0,1,\ldots,\frac{k-4}{2}\}$, be two permutations of ${\mathfrak S}_q$.
Then the set $G_3=\left <a,b : |a|=2, |b|=2k \text{ and } ab=b^{k-1}a\right >$ is a subgroup of ${\mathfrak S}_q$ of order $q$ in which no permutation has any fixed point except the identity.
\end{thm}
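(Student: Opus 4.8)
The plan is to follow exactly the same three-stage template used in the proofs of Theorems~\ref{T31} and~\ref{T32}: (i) verify the defining group relation, which here is $ab = b^{k-1}a$; (ii) deduce that $G_3$ is a subgroup and compute its order; (iii) check that every non-identity element fixes no point of $\F_q$. Since $G_3$ is presented abstractly as $\langle a, b : |a| = 2,\ |b| = 2k,\ ab = b^{k-1}a\rangle$, the real content is to show that the two explicit permutations $a$ and $b$ of $\mathfrak{S}_q$ actually satisfy these relations, and in particular that they generate a group of order exactly $q = 4k$ rather than something larger or smaller. Note $b^{k-1}$ makes sense as an involution-like conjugation rule because $(k-1)^2 = k^2 - 2k + 1 \equiv 1 \pmod{2k}$, so the relation $ab = b^{k-1}a$ is self-consistent with $|a| = 2$; this is the analogue of the $ab = b^{k+1}a$ relation in Theorem~\ref{T31} (indeed $k+1 \equiv k-1 \pmod{2k}$ would be false, so this genuinely is a different group, a different semidirect product $\Z_{2k} \rtimes \Z_2$).

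First I would establish $|a| = 2$ (immediate, $a$ is a product of transpositions) and $|b| = 2k$ (each of the two cycles in $b$ has length $2k$; one should double-check the cycle lengths by counting the listed entries, which is the bookkeeping part). Then the crux: prove $ab(c_t) = b^{k-1}a(c_t)$ for every $t \in \{0, \ldots, 4k-1\}$. As in Theorem~\ref{T31}, I would first write down $b^{k-1}$ explicitly as a product of two $2k$-cycles — this is obtained from $b$ by the same index pattern shifted appropriately — and then split into the case $t$ even and $t$ odd, each further subdivided into the ``generic'' ranges (the arithmetic-progression parts of the cycles) and a handful of boundary indices (the boldface entries $c_{k+1}, c_{3k}, c_{2k}, c_k$ and the cycle endpoints), verified one at a time. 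In the generic ranges one checks that both $ab$ and $b^{k-1}a$ implement the same index map $c_t \mapsto c_{t'}$ where $t'$ is given by an explicit affine formula in $t$; the exceptional indices are handled by direct substitution exactly as in Subcases~1.3 and~2.3 of Theorem~\ref{T31}.

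Once $ab = b^{k-1}a$ is in hand, $G_3 = \{b^j a^i : 0 \le j \le 2k-1,\ 0 \le i \le 1\}$ is closed under products (every word in $a, b$ can be pushed to the normal form $b^j a^i$ using the relation), hence a subgroup, and $|G_3| \le 2k \cdot 2 = q$. For the lower bound $|G_3| = q$ I would argue as in Theorem~\ref{T31}: the $b^j$ are distinct for $0 \le j \le 2k-1$ since $|b| = 2k$, the $b^j a$ are likewise distinct, and $b^j = b^s a$ would force $a = b^{j-s}$, whence $b^{k-1}a = ab = b^{j-s}b = b^{j-s+1}$ and also $b^{k-1}a = b^{k-1}b^{j-s} = b^{k-1+j-s}$, giving $b^{k-2} = I$, contradicting $|b| = 2k \ge 8$ (since $\ell \ge 2$). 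So $|G_3| = q$. Finally, for fixed-point-freeness: the $b^j$ with $1 \le j \le 2k-1$ fix nothing because $b$ is a product of two $2k$-cycles; and $b^j a(c_t) = b^j(c_{t \pm 1}) = c_t$ is impossible because, as one reads off from the cycle structure of $b$, $b^j$ never maps $c_{t+1}$ to $c_t$ nor $c_{t-1}$ to $c_t$ — the key structural observation being that in $b$ an element with even index is always followed (within its cycle, up to the $b^j$ power bookkeeping) by an element whose index is \emph{not} one less, and similarly for odd indices, mirroring the argument at the end of Theorems~\ref{T31} and~\ref{T32}.

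The main obstacle is purely the verification of $ab = b^{k-1}a$: getting the explicit form of $b^{k-1}$ right and correctly matching up the affine index formulas across all the sub-ranges and boundary cases without an off-by-one error. There is no conceptual difficulty — it is the same dihedral-type/semidirect-product argument as before — but the combinatorial bookkeeping with the interleaved arithmetic progressions and the boldface ``patternless'' entries is delicate and must be done carefully. A secondary (minor) point worth stating cleanly is why the presentation is consistent, i.e.\ that $(k-1)^2 \equiv 1 \pmod{2k}$, so that conjugating twice by $a$ returns $b$ to itself, matching $|a| = 2$.
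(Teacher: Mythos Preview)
Your proposal is correct and follows essentially the same three-stage template as the paper's proof: verify $ab = b^{k-1}a$ by first writing out $b^{k-1}$ explicitly and then doing a case analysis on the parity and range of the index $t$ (with the boldface boundary indices handled separately), deduce $|G_3|=q$ via the contradiction $b^{k-2}=I$, and rule out fixed points of $b^{j}a$ by observing from the cycle structure of $b$ that $b^{j}$ cannot send $c_{t\pm 1}$ to $c_t$. Your consistency remark $(k-1)^2\equiv 1\pmod{2k}$ is a nice sanity check that the paper does not make explicit; the only place your sketch is slightly looser than the paper is the fixed-point step, where the precise fact used is that $c_t$ and $a(c_t)$ always lie in \emph{different} $b$-cycles (not merely that adjacent entries in a cycle differ by more than one).
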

\begin{proof}
First, we shall show that $ab=b^{k-1}a$ which will play prominent role to prove that all the elements $b^{j_1}$ and $b^{j_2}a$ are different, where $0 \leq j_1, j_2 \leq 2k-1$, i.e.,  $|G_3|=q$. We observe that 
\begin{equation*}
\begin{split}
b^{k-1}=&(c_{3k-1},c_{2k-2},\ldots,c_{3k-(2j+1)},c_{2k-2(j+1)},\ldots,c_{2k+3},c_{k+2},{\bf c_{2k+1},c_{k+1}},c_{4k-2},
c_{k-2},\ldots,  c_{4k-2(i+1)},\\&c_{k-2(i+1)},\ldots,c_{3k},c_{0}) ({\bf c_{k},c_{3k+1}}, c_{k+3}, c_{3k+3},\ldots, c_{k+2(j+1)+1},c_{2k+2(j+1)+1},c_{2k-3},c_{4k-3},c_{2k-1},\\&c_{4k-1},c_{1},c_{2k},\ldots,c_{2i+1},c_{2(i+k)}, \ldots,c_{k-1},c_{3k-2}).
\end{split}
\end{equation*}
Let $c_t$ be an arbitrary element of $\F_q$. We shall split our proof into two parts depending on the parity of $t$, namely, $t$ even and $t$ odd, respectively.

\textbf{Case 1:} Let $ t=2r$, where $r \in \{0,1,\ldots,2k-1\}$. Further, we divide this case into the following subcases:

\textbf{Subcase 1.1:} When $r \in \{0,1,\ldots, \frac{k-2}{2}\}$ then $b$ sends $c_{2r}$  to $c_{2r+2k+1}$, therefore, 
\[ab(c_{2r})=a(c_{2r+2k+1})=c_{2r+2k}.\] 
Now, 
\[b^{k-1}a(c_{2r})=b^{k-1}(c_{2r+1})=c_{2r+2k}.\]

\textbf{Subcase 1.2:} Let $r \in \{\frac{k}{2}+1, \frac{k}{2}+1,\ldots,k-1\}$.  Then  using the definition of $b$ and $b^{k-1}$, we have
 \[ab(c_{2r})=a(c_{2r+2k})=c_{2r+2k+1}=b^{k-1}(c_{2r+1})=b^{k-1}a(c_{2r}).\] 
 
\textbf{Subcase 1.3:} If $r \in \{k+1,k+2,\ldots,\frac{3}{2}k-1\}$ then  
\[ab(c_{2r})=a(c_{2r-k+1})=c_{2r-k}. \] Moreover, we have 
\[b^{k-1}a(c_{2r})=b^{k-1}(c_{2r+1})=c_{2r+1-k-1}=c_{2r-k}.\]

\textbf{Subcase 1.4:} Let  $r \in \{\frac{3k}{2},\frac{3k}{2}+1,\ldots, 2k-2\}$. In this subcase, we have
\[ab(c_{2r})=a(c_{2r-2k+2})=c_{2r-2k+3}=b^{k-1}(c_{2r+1})=b^{k-1}a(c_{2r}).\]

\textbf{Subcase 1.5:} We can easily verify that
\[ 
ab(c_{k})=a(c_{4k-1})=c_{4k-2}=b^{k-1}(c_{k+1})=b^{k-1}a(c_{k}),
\]
\[
ab(c_{2k})=a(c_{k})=c_{k+1}=b^{k-1}(c_{2k+1})=b^{k-1}a(c_{2k}),
\]
and
\[
ab(c_{4k-2})=a(c_{0})=c_{1}=b^{k-1}(c_{4k-1})=b^{k-1}a(c_{4k-2}).
\]

\textbf{Case 2:} In this case, we assume that  $t=2r+1$ and $r \in \{0,1,\ldots,2k-1\}$. We shall again divide this case into the following subcases.

\textbf{Subcase 2.1:} Let $r \in \{1,\ldots, \frac{k}{2}-1\}$. Then we have, 
\[ab(c_{2r+1})=a(c_{2r+1+3k-2})=c_{2r+3k-2}.\] 
Now, 
\[b^{k-1}a(c_{2r+1})=b^{k-1}(c_{2r})=c_{2r+3k-2}.\]

\textbf{Subcase 2.2:} If $r \in \{\frac{k}{2}+1, \frac{k}{2}+2, \ldots, k-1\}$ then $b$ maps $c_{2r+1}$ to $c_{2r+1+k-3}=c_{2r+k-2}$, therefore, 
\[ab(c_{2r+1})=a(c_{2r+k-2})=c_{2r+k-1},\] 
and 
\[b^{k-1}a(c_{2r+1})=b^{k-1}(c_{2r})=c_{2r+k-1}.\]

\textbf{Subcase 2.3:} When $r \in \{k,k+1,\ldots,\frac{3}{2}k-2\}$ then  
\[ab(c_{2r+1})=a(c_{2r+1-(2k-1)})=c_{2r-2k+3}=b^{k-1}(c_{2r})=b^{k-1}a(c_{2r+1}). \] 

\textbf{Subcase 2.4:} Let  $r \in \{\frac{3}{2}k,\frac{3}{2}k+1,\ldots, 2k-1\}$. In this subcase, we have 
\[ab(c_{2r+1})=a(c_{2r+1-3k})=c_{2r-3k}=b^{k-1}(c_{2r})=b^{k-1}a(c_{2r+1}).\] 

\textbf{Subcase 2.5:} One can easily verify for the remaining elements of $\F_q$, namely, $c_{1}$, $c_{k+1}$ and $c_{3k-1}$ that 
\[ 
ab(c_{1})=a(c_{3k-2})=c_{3k-1}=b^{k-1}(c_{0})=b^{k}a(c_{1}),
\]
\[ 
ab(c_{k+1})=a(c_{3k})=c_{3k+1}=b^{k-1}(c_{k})=b^{k}a(c_{k+1}),\text{ and}
\]
\[
ab(c_{3k-1})=a(c_{k+1})=c_{k}=b^{k-1}(c_{3k-2})=b^{k-1}a(c_{3k-1}).
\]

From the above two cases,  we have $ab=b^{k-1}a$ which implies that every element of $G_3$ can be written as $b^{j}a^{i}$, where $j \in \{0,1,\ldots,2k-1\}$ and $i \in \{0,1\}$. Next, we shall show that $|G_3|=q$. First, it is clear that $b^{j}$, $j \in \{0,1,\ldots,2k-1\}$, are all distinct elements of $G_3$ as $b$ is a product of two disjoint cycles each of length $2k$. Similarly, all $b^{j}a$, $j \in \{0,1,\ldots,2k-1\}$, are different. For the remaining case, on the contrary, we assume that $b^{j_1}=b^{j_2}a$ for some $0 \leq j_1,j_2 \leq 2k-1$, which implies that $ a=b^{j_1-j_2}$. Now  using the above derived relation $ab=b^{k-1}a$, we have $b^{j_1-j_2}b=b^{k-1}b^{j_1-j_2}$, this gives  $I=b^{k-2},$ which is a contraction. Therefore, $b^{j_1}a^{i_1} \neq b^{j_2}a^{i_2}$ for all $j_1,j_2 \in \{0,1, \ldots, 2k-1\}$ and $i_1,i_2 \in \{0,1\}$, which gives $|G_3|=q$. 

Thus, it only remains to show that non-identity elements of $G_3$ have no fixed points.  By the similar argument used in Theorem \ref{T31}, $b^{j}$, where $j \in \{0,1,\ldots,2k-1\}$,  does not fix any element of $\F_q$. Now we shall show that $b^ja$, $j \in \{0,1,\ldots,2k-1\}$, will not fix any element of $\F_q$. Let $c_t$ be an element of $\F_q$ and $b^ja \in G_3$ be a permutation, where $j \in \{0,1,\ldots,2k-1\}$. First, let $0 \leq t \leq 4k-2$ be an even integer then $b^ja(c_{t})=b^j(c_{t+1})$. Therefore, for some $t$ even, $c_t$ is fixed by $b^ja$ if and only if $b^j(c_{t+1})=c_t$. Now, from the cyclic structure of $b$, either there does not exist such a $j$ or if it exists then $t+1 \in \{k,k+2,2k,3k\}$. This leads to a contradiction as $t$ is even. Similarly, by using the definition of $b$, we can handle the case when $t$ is odd.
\end{proof}

\begin{rmk}\label{Not_Abelian}
The subgroups $G_1$, $G_2$ and $G_3$ in Theorem \ref{T31}, Theorem \ref{T32} and Theorem \ref{T33}, respectively, are not Abelian unlike the subgroup $G$ in Lemma \ref{JJ_tpl}.
\end{rmk}

\begin{thm}
The permutation group polynomials (say $f_1,f_2$ and $f_3 \in \F_q[X,Y]$) corresponding to $G_1$, $G_2$ and $G_3$, respectively, are not equivalent to the only known family of permutation group polynomials and are also not equivalent among themselves.
\end{thm}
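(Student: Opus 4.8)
The plan is to extract from the equivalence relation on LPPs a purely group-theoretic invariant of permutation group polynomials, and then to check that the $e$-Klenian group $G$ of Lemma~\ref{JJ_tpl} and the groups $G_1,G_2,G_3$ of Theorems~\ref{T31},~\ref{T32},~\ref{T33} fall into four distinct classes for that invariant. The key preliminary step is to show: \emph{if two permutation group polynomials over $\F_q$ are equivalent, and their permutation polynomial tuples generate the subgroups $H$ and $K$ of ${\mathfrak S}_q$, then $H$ and $K$ are conjugate in ${\mathfrak S}_q$} (hence abstractly isomorphic). Indeed, writing $\underline{\beta}_f=(\beta_0,\dots,\beta_{q-1})$, $\underline{\beta}_g=(\gamma_0,\dots,\gamma_{q-1})$, $H=\{\beta_i\}$ and $K=\{\gamma_i\}$, the relation $\gamma_i=\sigma\circ\beta_i\circ\lambda$ (for all $i$, with $\sigma,\lambda\in{\mathfrak S}_q$ fixed) gives the set equality $K=\sigma H\lambda$; since $K$ is a group, $I\in K$, so $\sigma\beta_{i_1}\lambda=I$ for some $i_1$, whence $\lambda=\beta_{i_1}^{-1}\sigma^{-1}$, and then $K=\sigma H\beta_{i_1}^{-1}\sigma^{-1}=\sigma H\sigma^{-1}$ because $\beta_{i_1}\in H$. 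Thus it suffices to prove that $G,G_1,G_2,G_3$ are pairwise non-isomorphic whenever they are defined over a common $\F_q$.

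Next I would identify the four groups up to isomorphism. By Remark~\ref{Not_Abelian} the group $G$ is Abelian, while $G_1,G_2,G_3$ are not; hence none of $f_1,f_2,f_3$ is equivalent to an $e$-Klenian polynomial over the same field, for any admissible $e$. It remains to separate $G_1,G_2,G_3$, which are simultaneously defined exactly when $q=4k=2^{\ell+2}$ with $\ell\ge2$. From their presentations, with $|b|=2k=q/2$ and $|a|=2$, one has $aba^{-1}=b^{k+1}$ in $G_1$, $aba^{-1}=b^{-1}$ in $G_2$, and $aba^{-1}=b^{k-1}$ in $G_3$. Since $k\ge4$, neither $k+1$ nor $k-1$ is $\equiv\pm1\pmod{2k}$, so $G_1$ is the modular maximal-cyclic group of order $q$, $G_2$ is the dihedral group of order $q$, and $G_3$ is the semidihedral group of order $q$.

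To finish, I would distinguish these three $2$-groups of order $q$ by counting their involutions. From $aba^{-1}=b^{u}$ one computes $(b^{j}a)^2=b^{(u+1)j}$, so the elements of order $2$ outside $\langle b\rangle$ are the $b^{j}a$ with $(u+1)j\equiv0\pmod{2k}$, while the only involution inside $\langle b\rangle$ is $b^{k}$. With $u=k+1$ we have $\gcd(k+2,2k)=2$, so $G_1$ has exactly $3$ involutions; with $u=k-1$ the condition reduces to $j$ even, so $G_3$ has exactly $\tfrac q4+1$; and with $u\equiv-1$ every $b^{j}a$ is an involution, so $G_2$ has exactly $\tfrac q2+1$. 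For $q=2^{\ell+2}\ge16$ the three numbers $3$, $\tfrac q4+1$, $\tfrac q2+1$ are pairwise distinct, so $G_1,G_2,G_3$ are pairwise non-isomorphic; together with the preliminary step this gives the claimed pairwise inequivalence of $f_1,f_2,f_3$.

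I expect the two points requiring care to be: (i) the passage in the preliminary step from the set identity $K=\sigma H\lambda$ to the conjugation $K=\sigma H\sigma^{-1}$, which genuinely uses that $K$ contains the identity permutation; and (ii) the small-case bookkeeping in the identification of the groups, where the hypothesis $\ell\ge2$ is indispensable — at $\ell=1$, i.e. $q=8$, the relation $aba^{-1}=b^{k-1}$ degenerates to $aba^{-1}=b$ and $G_3$ would become Abelian, so the separation would collapse; this is precisely why Theorems~\ref{T31} and~\ref{T33} require $\ell\ge2$.
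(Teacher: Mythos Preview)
Your proof is correct and follows essentially the same route as the paper: you reduce equivalence of permutation group polynomials to conjugacy of the underlying subgroups via the identity $K=\sigma H\lambda\Rightarrow K=\sigma H\sigma^{-1}$, separate the $e$-Klenian group from $G_1,G_2,G_3$ by the Abelian/non-Abelian dichotomy, and separate $G_1,G_2,G_3$ from one another by counting involutions (obtaining $3$, $\tfrac{q}{2}+1$, and $\tfrac{q}{4}+1$, respectively). The only addition is your explicit identification of $G_1,G_2,G_3$ as the modular maximal-cyclic, dihedral, and semidihedral groups of order $q$, which is correct and a pleasant bonus but not used in the argument itself.
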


\begin{proof}  First, we shall show that $f_1,f_2$ and $f_3$ are inequivalent to the only  known family of permutation group polynomials, called the $e$-Klenian polynomials, given by Gutierrez and Urroz  (see; Lemma~\ref{JJ_tpl}). Let $f\in \F_q[X,Y]$ be an $e$-Klenian polynomial and $G$ be the subgroup corresponding to the permutation polynomial tuple of $f$. Now, on the contrary, we assume that $f$ is equivalent to $f_1$ which implies that there exist $\sigma, \lambda \in {\mathfrak S}_q$ such that  $\sigma G  \lambda= G_1$. Therefore, $\sigma a_1 \lambda=I$ for some $a_1 \in G$ which implies $G_1=\sigma G\sigma^{-1}$. Now we shall prove $\sigma G \sigma^{-1}$ is an Abelian subgroup of ${\mathfrak S}_q$. Let $\sigma a_1 \sigma^{-1}$ and $\sigma a_2 \sigma^{-1}$ be two elements of $\sigma G \sigma^{-1}$. Then
\[
(\sigma a_1\sigma^{-1}) (\sigma a_2\sigma^{-1})= \sigma a_1 a_2 \sigma^{-1}=\sigma a_2 a_1\sigma^{-1}=(\sigma a_2 \sigma^{-1}) (\sigma a_1 \sigma^{-1}).
\]
This implies $\sigma G \sigma^{-1}=G_1$ is an Abelian subgroup, which is a contradiction. Therefore, $f_1$ is inequivalent to $f$. Using the similar arguments, we can say that $f_2$ and $f_3$ are inequivalent to $f$.

 Now we shall prove that $f_1,f_2$ and $f_3$ are inequivalent among themselves. From the above discussion, it is sufficient to show that their corresponding subgroups are not conjugate to each other. To this end, we will prove that the number of  elements having order $2$ in each subgroup are different. We have $$G_1=\{b^{j}a^{i}: 0 \leq j \leq 2k-1=\frac{q}{2}-1, 0\leq i \leq 1\},$$ where $ab=b^{k+1}a, |a|=2, |b|=2k$, $k=2^{\ell}$ for some positive integer $\ell \geq 2$ and $q=4k$. Now if $i=0$ then there is only one element of the  form $b^{j}$ of order $2$ which is $b^{k}$. Next, we suppose that $|b^ja|=2$ for some $j \in \{0,1,\ldots,2k-1=2^{\ell+1}-1\}$ then  $b^jab^ja=I$. Now using the given relation $ab=b^{k+1}a$, we have $b^{(k+2)j}=I,$ which is true if and only if   $2k \mid (k+2)j$, i.e., $2^{\ell+1}\mid (2^{\ell}+2)j=(2^{\ell-1}+1)2j,$ which is again equivalent to $2^{\ell+1}\mid 2j$ as $\ell\geq 2$ and $\gcd(2^{\ell-1}+1,2^{\ell+1})=1$.  As $2^{\ell+1}\mid 2j$ if and only if $j=0$ or $j=2^{\ell}$ as $0 \leq j<2^{\ell+1}$. So, there are exactly three elements of order two in $G_1$. Now, since $G_2$ is a Dihedral group, so, it has $\frac{q}{2}+1$ elements of order two. Next, we count the number of elements of order two in $$G_3=\{b^{j}a^{i}: 0 \leq j \leq 2k-1=\frac{q}{2}-1, 0\leq i \leq 1, ab=b^{k-1}a, |a|=2, |b|=2k\},$$ where $k=2^{\ell}$ for some positive integer $\ell \geq 2$. Here, again we have only one element of order two of the form $b^j$ by the same argument. Now, let $b^ja$ be an element of order two in $G_3$, where $j \in \{ 0,1, \ldots,2k-1=2^{\ell+1}-1\}$. Then we have $b^jab^ja=I$, which is equivalent to $b^{kj}=I$ as $ab=b^{k-1}a$ and $|a|=2$. The equality $b^{kj}=I$  is true if and only if  $2k=2^{\ell+1}\mid kj=2^{\ell}j$, which is possible if and only if $j$ is even. Therefore, there are exactly $k+1=\frac{q}{4}+1$ elements of order two in $G_3$.  Thus, for $q\geq 16$ the subgroups $G_1$, $G_2$ and $G_3$ have different number of elements of order two, which concludes that $G_1$, $G_2$ and $G_3$ are not conjugate to each other and consequently  $f_1$, $f_2$ and $f_3$ are inequivalent to each other.
\end{proof}

\section{Companions of local permutation polynomials}\label{S4}
In this section, we investigate the existence of companions for bivariate LPPs. As noted in~\cite[Theorem 36]{JJ_2023}, a companion provides an effective algebraic approach for constructing a complete set of MOLS, which have a wide range of applications (see, for example, \cite{KD_2015,MGFL_2020,SS_1992,WW_2014}). Gutierrez and Urroz~\cite{JJ_2023} introduced a family of permutation group polynomials known as $e$- Klenian polynomials over $\F_q$ and they proved the existence of companions for these polynomials when the characteristic of $\F_q$ is odd. However, their results for finite fields with even characteristic were primarily experimental. We show that there are no companions for $0$- Klenian polynomials and give explicit expressions for companions of $e$- Klenian polynomials over finite fields with even characteristic for all $e \geq 1$. Moreover, we present companions for each of the families of permutation group polynomials discussed in Section~\ref{S3}. The following definitions will be frequently used in our results.
\begin{defn}\label{D45}
 Let $h_1,h_2\in\F_q[X]$ be two permutation polynomials. Let $A :=\{(c,h_1(c)):c \in \F_q\}$ and $B:=\{ (c,h_2(c)):c \in \F_q\}$. We say that $h_1$ intersects $h_2$ simply  if $|A \cap B|=1$. 
 \end{defn}
 
 \begin{defn}
Let $f\in\F_q[X,Y]$ be an LPP and $\underline{\beta}_f=(\beta_0,\ldots,\beta_{q-1}) \in {\mathfrak S}_q^q$ be the corresponding permutation polynomial tuple. We say that a univariate permutation polynomial $h$ intersects the bivariate LPP $f$ simply if $h$ intersects $\beta_i$ simply for each $0 \leq i \leq q-1$. 
 \end{defn}

The following lemma translates the problem of the investigation of the orthogonality of two LPPs to their corresponding permutation polynomials tuples.
\begin{lem}\label{ortho_tpl}
Let $\underline{\beta}_f =(\beta_0,\ldots,\beta_{q-1})$ and $\underline{\gamma}_g= (\gamma_0,\ldots,\gamma_{q-1})$ be the permutation polynomial tuples corresponding to the LPPs $f$ and $g \in \F_q[X,Y]$, respectively. Then $f$ and $g$ are orthogonal if and only if $\beta_i$ intersects $\gamma_k$ simply for all $i,k \in \{0, \ldots, q-1\}$. 
\end{lem}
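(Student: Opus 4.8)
The plan is to unwind both sides of the claimed equivalence using the dictionary of Lemma \ref{tpl}, which says that the LPP $f$ is determined by the rule $f(x,\beta_i(x)) = c_i$ for every $x \in \F_q$ and every $0 \leq i \leq q-1$, and similarly $g(x,\gamma_k(x)) = c_k$. First I would recall from the discussion preceding Definition \ref{D45} that $f$ and $g$ are orthogonal precisely when the system $f(X,Y) = a$, $g(X,Y) = b$ has a unique solution $(x,y) \in \F_q \times \F_q$ for every pair $(a,b) \in \F_q \times \F_q$. Writing $a = c_i$ and $b = c_k$ and using the tuple description, the locus $f(X,Y) = c_i$ is exactly the graph $A_i := \{(x,\beta_i(x)) : x \in \F_q\}$ and the locus $g(X,Y) = c_k$ is exactly the graph $B_k := \{(x,\gamma_k(x)) : x \in \F_q\}$; indeed $f(x,y) = c_i$ forces $y$ to be the unique value $\beta_i(x)$ that the $i$-th component sends $x$ to, since $f(x,\cdot)$ is a permutation of $\F_q$ attaining $c_i$ exactly once.

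Next I would observe that a common solution $(x,y)$ of the system with $(a,b) = (c_i,c_k)$ is by the previous paragraph exactly a point of $A_i \cap B_k$. Hence the system has a unique solution for the pair $(c_i,c_k)$ if and only if $|A_i \cap B_k| = 1$, which by Definition \ref{D45} is precisely the statement that $\beta_i$ intersects $\gamma_k$ simply. Since $(a,b)$ ranges over all of $\F_q \times \F_q$ exactly as $(i,k)$ ranges over $\{0,\ldots,q-1\}^2$ (because $c_0,\ldots,c_{q-1}$ is our fixed enumeration of $\F_q$), orthogonality of $f$ and $g$ is equivalent to $|A_i \cap B_k| = 1$ for all $i,k$, i.e. to $\beta_i$ intersecting $\gamma_k$ simply for all $i,k \in \{0,\ldots,q-1\}$. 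This establishes both directions simultaneously.

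One small point that should be made carefully, and which I regard as the only place requiring a sentence of genuine argument rather than bookkeeping, is the identification of the solution set of $f(X,Y) = c_i$ with the graph $A_i$: one must note that for each fixed $x$, the univariate polynomial $f(x,\cdot) \in \F_q[Y]$ is a permutation polynomial (this is exactly the definition of an LPP restricted to the second variable), so the equation $f(x,Y) = c_i$ has precisely one root $Y$, and by Lemma \ref{tpl} that root is $\beta_i(x)$. Thus the fibre of $f$ over $c_i$ meets each vertical line $\{x\} \times \F_q$ in exactly one point, namely $(x,\beta_i(x))$, so it equals $A_i$ as a set; the same reasoning applies to $g$ and $B_k$. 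With this identification in hand the rest is the purely formal chain of equivalences above, so no further obstacle arises.
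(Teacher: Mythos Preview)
Your proof is correct and follows essentially the same approach as the paper: define the graphs $A_i=\{(x,\beta_i(x)):x\in\F_q\}$ and $B_k=\{(x,\gamma_k(x)):x\in\F_q\}$, identify them with the fibres $f^{-1}(c_i)$ and $g^{-1}(c_k)$, and conclude that unique solvability of the system is equivalent to $|A_i\cap B_k|=1$. If anything, your version is more careful than the paper's, since you explicitly justify (via the LPP property that $f(x,\cdot)$ is a permutation) why the fibre of $f$ over $c_i$ is \emph{exactly} $A_i$ rather than merely containing it.
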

\begin{proof}
Our aim is to show that $f$ and $g$ are orthogonal if and only if $\beta_i$ intersects $\gamma_k$ simply for all $i,k \in \{0, \ldots, q-1\}$. Consider the following sets 
  \begin{equation*}\label{E42}
 \begin{split}
 A_i=\{(c,\beta_i(c)): c\in \F_q\}, \text{ and }
 B_k=\{(c,\gamma_k(c)):c \in \F_q\}, \text{ for } 0 \leq i,k \leq q-1.
 \end{split}
 \end{equation*}
 Since $\underline{\beta}_f$ and $\underline{\gamma}_g$ are permutation polynomials tuples of $f$ and $g$, respectively. Therefore,  $f$ maps each element of $A_i$ to $c_i \in  \F_q$ and $g$ maps each element of $B_k$ to $c_k \in \F_q$. Now, $f$ and $g$ are orthogonal if and only if  $|A_i \cap B_k|=1$ for all $i,k \in \{0, \ldots, q-1\}$. Equivalently, $\beta_i$ and $\gamma_k$ intersect simply for all $i,k \in \{0, \ldots, q-1\}$.
\end{proof}
Next, we present a crucial lemma for determining whether a permutation group polynomial 
$f \in \F_q[X,Y]$ has a companion. This lemma simplifies the problem by reducing it to find a univariate polynomial that intersects $f$ simply.
\begin{lem}\label{T43}
Let $f\in \F_q[X,Y]$ be a permutation group polynomial and $h\in\F_q[X]$ be a permutation polynomial which intersects $f$ simply. Let $\underline{\beta}_f=(\beta_0,\ldots,\beta_{q-1})$ be the permutation polynomial tuple corresponding  to $f$. Then the polynomial $g$ associated  to $\underline{\gamma}_g=(h \beta_0, \ldots,h \beta_{q-1})$ is a companion of $f$.
\end{lem}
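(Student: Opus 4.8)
The claim is that if $f$ is a permutation group polynomial with tuple $\underline{\beta}_f = (\beta_0,\ldots,\beta_{q-1})$ and $h$ is a permutation polynomial intersecting $f$ simply, then $g$ associated to $\underline{\gamma}_g = (h\beta_0,\ldots,h\beta_{q-1})$ is a companion of $f$. By Lemma~\ref{ortho_tpl}, it suffices to show two things: first, that $\underline{\gamma}_g$ is genuinely a permutation polynomial tuple (so that $g$ is a well-defined LPP via Lemma~\ref{tpl}), and second, that $\beta_i$ intersects $h\beta_k$ simply for every pair $i,k \in \{0,\ldots,q-1\}$. The first I would dispatch quickly: each $h\beta_i \in {\mathfrak S}_q$ since it is a composition of permutations, and for $i \neq j$ we have $(h\beta_i)^{-1}(h\beta_j) = \beta_i^{-1} h^{-1} h \beta_j = \beta_i^{-1}\beta_j$, which has no fixed point because $\underline{\beta}_f$ is a permutation polynomial tuple; hence $\underline{\gamma}_g$ is one too.

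\smallskip\noindent
The heart of the argument is the simple-intersection condition, and this is where the group structure enters. Fix $i,k$. I want to count the elements $c \in \F_q$ with $\beta_i(c) = h\beta_k(c)$, and show there is exactly one. Applying $\beta_i^{-1}$ and writing $\sigma = \beta_i^{-1}h\beta_k$, this is the number of fixed points of $\sigma$. Now the key move: since $\{\beta_0,\ldots,\beta_{q-1}\}$ is a subgroup $G$ of ${\mathfrak S}_q$, the product $\beta_i^{-1}\beta_k$ lies in $G$, say $\beta_i^{-1}\beta_k = \beta_m$ for a unique $m$. I would like to rewrite $\sigma = \beta_i^{-1} h \beta_k$ in a form that makes the hypothesis ``$h$ intersects $f$ simply'' directly applicable — that hypothesis says precisely that $h$ and $\beta_m$ intersect simply, i.e. $|\{c : h(c) = \beta_m(c)\}| = 1$, equivalently $\beta_m^{-1}h$ (or $h^{-1}\beta_m$) has exactly one fixed point. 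The bridge is conjugation: $\beta_i^{-1} h \beta_k = \beta_i^{-1} h \beta_i \cdot \beta_i^{-1}\beta_k = (\beta_i^{-1} h \beta_i)\,\beta_m$. This does not immediately reduce to $h\beta_m$ unless $h$ commutes with $\beta_i$, which we cannot assume. So instead I would count fixed points of $\sigma$ directly: $\sigma(c) = c \iff h\beta_k(c) = \beta_i(c) \iff h(\beta_k(c)) = \beta_i(c)$. Substituting $d = \beta_k(c)$ (a bijective change of variable) gives $h(d) = \beta_i\beta_k^{-1}(d)$. Since $G$ is a group, $\beta_i\beta_k^{-1} \in G$, so $\beta_i\beta_k^{-1} = \beta_n$ for some unique index $n$. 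Thus the number of $c$ with $\beta_i(c) = h\beta_k(c)$ equals the number of $d$ with $h(d) = \beta_n(d)$, which is $|A \cap B|$ in the notation of Definition~\ref{D45} with $h_1 = h$, $h_2 = \beta_n$ — and this is $1$ by the hypothesis that $h$ intersects $f$ (hence $\beta_n$) simply.

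\smallskip\noindent
Assembling: for each $i,k$ the sets $A_i = \{(c,\beta_i(c))\}$ and $B_k = \{(c, h\beta_k(c))\}$ meet in exactly one point, so by Lemma~\ref{ortho_tpl} the LPPs $f$ and $g$ are orthogonal, i.e. $g$ is a companion of $f$. I would also note that $g$ is indeed an LPP (not merely a permutation polynomial): this follows from Lemma~\ref{tpl} applied to the valid permutation polynomial tuple $\underline{\gamma}_g$ constructed in the first step.

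\smallskip\noindent
\textbf{Main obstacle.} The only delicate point is the change-of-variable bookkeeping that converts ``$\beta_i(c) = h\beta_k(c)$'' into ``$h(d) = \beta_n(d)$'' and the verification that $\beta_i\beta_k^{-1}$ really lands back in the index set — this is exactly where the subgroup hypothesis on $f$ is used and cannot be relaxed to an arbitrary LPP. Everything else (well-definedness of $\underline{\gamma}_g$, the appeal to Lemmas~\ref{tpl} and~\ref{ortho_tpl}) is routine.
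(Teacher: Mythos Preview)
Your proof is correct and follows essentially the same approach as the paper: reduce to showing $\beta_i$ and $h\beta_k$ intersect simply, then use the group structure to write the relevant condition as $h$ intersecting some $\beta_n\in G$ simply. The paper phrases the reduction as ``$(h\beta_i)\beta_j^{-1}=h(\beta_j\beta_i^{-1})^{-1}=h\beta_t^{-1}$ has a unique fixed point,'' while you do the equivalent bijective substitution $d=\beta_k(c)$; your explicit verification that $\underline{\gamma}_g$ is a permutation polynomial tuple is a welcome addition the paper handles only by the remark that $g$ is equivalent to $f$.
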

\begin{proof}
Notice that $g$ is equivalent to $f$ and hence it is also an LPP. Consider the set $A=\{(c,h(c)):c \in \F_q\}$. Since $h$ intersects $f$ simply, we have $|A \cap \{(c,\beta_i(c)):c\in \F_q\}|=1$ for all $i \in \{0, \ldots, q-1\}$. This is equivalent to say that $h(X)=\beta_i(X)$ has a unique solutions for all $i \in \{0, \ldots, q-1\}$, i.e., $h\beta_i^{-1}$ has a unique fixed point for all $i \in \{0, \ldots, q-1\}$. Now, from Lemma \ref{ortho_tpl}, it is sufficient to show that each permutation $h\beta_i$ of $\underline{\gamma}_g$ intersects each member $\beta_j$ of $\underline{\beta}_f$ simply, where $i,j \in \{0, \ldots, q-1 \}$. That is $(h \beta_i) \beta_j^{-1}=h(\beta_j \beta_i^{-1})^{-1}$ has a unique fixed point for all $i,j \in \{0, \ldots, q-1 \}$. Since $f$ is a permutation group polynomial, therefore, $ \beta_j \beta_i^{-1}=\beta_t$ for some $0 \leq t \leq q-1$ and $(h \beta_i) \beta_j^{-1}=h\beta_t^{-1}$, which has a unique fixed point from the above discussion.
\end{proof}
 Investigating whether a given permutation group polynomial has a companion is generally a difficult problem. In~\cite{JJ_2023}, the authors determined the companions for the $e$- Klenian polynomials over finite fields of odd characteristic. However, the problem of finding companions for $e$- Klenian polynomials over finite fields of even characteristic was left open. We resolve this problem completely in the next two theorems. In the following theorem, we show that $0$- Klenian polynomials do not have companions over finite fields of characteristic two.

\begin{thm}\label{T44}
Let $q=2^m$, where $m$ is a positive integer. Then $0$- Klenian polynomial over $\F_q$ has no companion.
\end{thm}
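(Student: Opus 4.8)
The plan is to use the correspondence from Lemma~\ref{tpl} together with the explicit description of the $0$-Klenian group in Lemma~\ref{JJ_tpl}, and to argue via the transversal/addition-table obstruction mentioned at the end of Section~\ref{S2}. When $e=0$ we have $\ell = p^0 = 1$, so in Lemma~\ref{JJ_tpl} the permutation $\alpha$ is a product of $t=q$ fixed points (it is the identity) and $\beta = C_{0,\beta}$ is the single $q$-cycle $(c_0, c_1, \ldots, c_{q-1})$. Hence the $0$-Klenian group is $G = \langle \beta \rangle \cong \Z/q\Z$, and the associated permutation polynomial tuple is $\underline{\beta}_f = (\beta^0, \beta^1, \ldots, \beta^{q-1})$, i.e. essentially the cyclic group acting by translations. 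The corresponding LPP $f$ is (up to the fixed enumeration of $\F_q$) the addition table of $\Z/q\Z$.

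First I would observe, via Lemma~\ref{ortho_tpl}, that a companion for $f$ is equivalent to a permutation polynomial tuple $\underline{\gamma}_g = (\gamma_0, \ldots, \gamma_{q-1})$ such that each $\gamma_k$ intersects each $\beta^i$ simply; in particular each $\gamma_k$ must intersect the identity permutation $\beta^0 = I$ simply, which says that $\gamma_k$ has exactly one fixed point. Next I would translate this into the language of Latin squares: a companion of the Latin square $L$ built from $f$ is an orthogonal mate, and an orthogonal mate of $L$ decomposes $L$ into $q$ disjoint transversals (one for each symbol of the mate). But $L$ is the Cayley table of $\Z/q\Z$ with $q = 2^m$ even, and by the classical result cited in Section~\ref{S2} (Euler; see \cite{EulerT, KD_2015}) the addition table of $\Z/n\Z$ with $n$ even has \emph{no} transversal at all. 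Since an orthogonal mate would require $q$ transversals, no companion can exist.

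To keep the argument self-contained at the level of permutation tuples rather than quoting the Latin-square folklore, I would alternatively give the direct counting proof: suppose $\underline{\gamma}_g$ is a companion tuple. Fixing $k$, the permutation $\gamma_k$ intersects $\beta^i$ simply for every $i$, meaning $\gamma_k \beta^{-i}$ has a unique fixed point for each $i \in \{0, \ldots, q-1\}$. Writing $\beta$ as translation by $1$ on $\Z/q\Z$ under the identification $c_j \leftrightarrow j$, and writing $\gamma_k$ as a bijection $\pi$ of $\Z/q\Z$, the condition ``$\gamma_k\beta^{-i}$ has a unique fixed point'' becomes: for each $i$ there is exactly one $x$ with $\pi(x) = x + i$. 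Summing the $q$ values $\pi(x) - x$ over $x \in \Z/q\Z$: on one hand each residue $i$ is attained exactly once, so the sum is $\sum_{i=0}^{q-1} i = \binom{q}{2} = \frac{q(q-1)}{2}$, which is $\frac{q}{2} \pmod q$ since $q$ is even — in particular nonzero mod $q$. On the other hand $\sum_x (\pi(x) - x) = \sum_x \pi(x) - \sum_x x = 0$ because $\pi$ is a bijection. This contradiction shows no such $\gamma_k$ (hence no companion) exists. I would present this counting version as the main proof, since it is short and avoids invoking external Latin-square theory, and perhaps remark that it is the transversal obstruction in disguise.

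The only real obstacle is bookkeeping: being careful that for $e=0$ the group really is the cyclic translation group (so that $\beta^j\beta^{-i} = \beta^{j-i}$ stays inside the group and the ``unique fixed point for every power'' reduction is valid), and making sure the indexing $c_j \leftrightarrow j$ is consistent with the definition of $0$-Klenian polynomials in Lemma~\ref{JJ_tpl}. Once that identification is pinned down, the parity computation $\frac{q(q-1)}{2} \not\equiv 0 \pmod q$ versus $\sum_x(\pi(x)-x) = 0$ closes the argument immediately.
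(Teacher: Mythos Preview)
Your proposal is correct and follows essentially the same approach as the paper: identify the $0$-Klenian tuple with the cyclic group $\langle \beta\rangle$ acting by the $q$-cycle, use Lemma~\ref{ortho_tpl} to reduce the existence of a companion to the existence of a permutation $\gamma_0$ intersecting every $\beta^i$ simply, and recognize this as a transversal of the Cayley table of $\Z/q\Z$, which does not exist for even $q$. The only difference is that the paper simply cites the classical no-transversal fact from Section~\ref{S2}, whereas you additionally supply the standard self-contained parity computation $\sum_x(\pi(x)-x)\equiv 0$ versus $\sum_{i=0}^{q-1} i\equiv q/2\pmod q$; this is a welcome elaboration but not a different route.
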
 
\begin{proof}
Let $f$ be a $0$- Klenian polynomial given in Lemma~\ref{JJ_tpl}. We may assume, without loss of generality, that the corresponding permutation polynomial tuple can be written as {\small $\underline{\beta}_f=(I,b,b^2,\ldots,b^{q-1})$,} where $b=(c_0,c_1,\ldots,c_{q-1})\in {\mathfrak S}_q$. Our aim is to show that $f$ does not have a companion. On the contrary, we assume that $g$ is a companion of $f$ and $\underline{\gamma}_g=(\gamma_0,\ldots,\gamma_{q-1})$ be the corresponding permutation polynomial tuple. Then from Lemma~\ref{ortho_tpl}, $\gamma_k$ intersects $b^i$ simply for all $i,k \in \{0, \ldots, q-1\}$. In the particular case when $k=0$, i.e., $\gamma_0$ intersects $b^i$ simply for all $i\in \{0, \ldots, q-1\}$, which implies that 
\[
\gamma_0=(c_0,c_{0+i_0})(c_1,c_{1+i_1}) \cdots (c_{q-1},c_{q-1+i_{q-1}}),
\]
where $\{i_0,i_1,\ldots,i_{q-1}\}=\{0,1,\ldots,q-1\}$ and indices are taken modulo $q$. Since $\gamma_0$ is a permutation, $\{0+i_0,1+i_1,\ldots, q-1+i_{q-1}\}$ is a transversal in the Cayley table of $\mathbb{Z}_q$, which is a contradiction. 
\end{proof}
In the following theorem, we prove the existence of a companion for  $e$- Klenian polynomials over finite fields of even characteristic, where $e \geq 1$.  
\begin{thm}\label{T45}
Let $q=2^m$, $1 \leq e < m$, $\ell=2^e, t=\dfrac{q}{\ell}=2^{m-e}$, where $m$ and $e$ are positive integers. Moreover, let  $f\in \F_q[X,Y]$ be an $e$-Klenian polynomial over $\F_q$. Then $f$ has a companion.
\end{thm}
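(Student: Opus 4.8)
The plan is to reduce everything, via Lemma~\ref{T43}, to a purely group-theoretic statement about the $e$-Klenian group. An $e$-Klenian polynomial $f$ is a permutation group polynomial whose permutation polynomial tuple $\underline{\beta}_f$ lists precisely the elements of the group $G=\{\alpha^{u}\beta^{v}:0\le u\le\ell-1,\ 0\le v\le t-1\}$ of Lemma~\ref{JJ_tpl}, so Lemma~\ref{T43} tells us it suffices to exhibit a single permutation $h$ of $\F_q$ (equivalently, a permutation polynomial $h\in\F_q[X]$, since every self-map of $\F_q$ is represented by a polynomial) that intersects every element of $G$ simply; the companion is then the LPP $g$ associated with $\underline{\gamma}_g=(h\beta_0,\dots,h\beta_{q-1})$. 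Because $G$ is a group, ``$h$ intersects every $\rho\in G$ simply'' means exactly that for each $\rho\in G$ the equation $h(x)=\rho(x)$ has a unique solution $x\in\F_q$.

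The crucial step is to recognise the $G$-set $\F_q$ concretely. As $|G|=q$ and every non-identity element of $G$ is fixed-point-free, $G$ acts regularly on $\F_q$. Writing the index $n$ of $c_{n}$ in the form $n=i\ell+j$ with $0\le i<t$ and $0\le j<\ell$, the cycle descriptions of $C_{i,\alpha}$ and $C_{j,\beta}$ in Lemma~\ref{JJ_tpl} show at once that $\alpha$ acts by $(i,j)\mapsto(i,\,j+1\bmod\ell)$ and $\beta$ by $(i,j)\mapsto(i+1\bmod t,\,j)$. Hence, under the bijection $c_{i\ell+j}\leftrightarrow(i,j)$, the field $\F_q$ is identified with the abelian group $A:=\Z_{t}\times\Z_{\ell}$ in such a way that $\alpha^{u}\beta^{v}$ becomes the translation $x\mapsto x+(v,u)$; thus $G$ \emph{is} the translation group of $A$. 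Under this identification the requirement ``$h$ intersects every $\rho\in G$ simply'' becomes ``for every $w\in A$ the equation $h(x)=x+w$ has a unique solution'', that is, $x\mapsto h(x)-x$ is a bijection of $A$. In other words, we must produce an orthomorphism of $A=\Z_{2^{m-e}}\times\Z_{2^{e}}$.

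Now $1\le e<m$ forces both $2^{m-e}$ and $2^{e}$ to exceed $1$, so $A$ is a \emph{non-cyclic} abelian $2$-group, and by the classical theorem of Paige on complete mappings such a group admits an orthomorphism; transporting it back to $\F_q$ and applying Lemma~\ref{T43} yields the companion. For an explicit choice, in the homocyclic case $m-e=e$ one may take $h(x,y)=(y,\,x+y)$ on $\Z_{2^{e}}\times\Z_{2^{e}}$: its matrix $\bigl(\begin{smallmatrix}0&1\\1&1\end{smallmatrix}\bigr)$ and the matrix $\bigl(\begin{smallmatrix}-1&1\\1&0\end{smallmatrix}\bigr)$ of the displacement map $x\mapsto h(x)-x$ each have determinant a unit modulo $2^{e}$, so both are bijections. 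It is worth observing that the hypothesis $e\ge1$ is sharp here: for $e=0$ one has $A=\Z_{2^{m}}$, a cyclic group of even order, whose Cayley table possesses no transversal, so no orthomorphism exists---this is exactly the obstruction exploited in Theorem~\ref{T44}.

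I expect the main work to lie in two places. First, one has to carefully extract from the dense cycle notation of Lemma~\ref{JJ_tpl} the clean fact that $G$ is the translation group of $\Z_{t}\times\Z_{\ell}$; once that is in place the problem is entirely about orthomorphisms of abelian $2$-groups. Second, and more substantively, since we want an \emph{explicit} companion rather than a bare existence proof, we must write down an explicit orthomorphism of $\Z_{2^{m-e}}\times\Z_{2^{e}}$ valid for all $1\le e<m$ and verify that both $h$ and $x\mapsto h(x)-x$ are permutations. The homocyclic case is immediate from the determinant computation above, but the general non-homocyclic case seems to require a separate, more delicate construction interpolating between the two moduli $2^{m-e}$ and $2^{e}$---that verification is the anticipated bottleneck.
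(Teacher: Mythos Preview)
Your reduction is correct and your identification of the $e$-Klenian group with the translation group of $A=\Z_{t}\times\Z_{\ell}$ is exactly the right perspective; the condition from Lemma~\ref{T43} then becomes precisely the existence of an orthomorphism of $A$, and for $1\le e<m$ the group $A$ is a non-cyclic abelian $2$-group, so the Hall--Paige theorem for abelian groups supplies one. This is a valid existence proof.

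Your route differs genuinely from the paper's. The paper does not invoke Hall--Paige; instead it writes down an explicit permutation $h$ of $\F_q$ (after reducing, without loss of generality, to the case $\ell\mid t$, i.e.\ $2e\le m$, by the evident symmetry between the $e$- and $(m-e)$-Klenian groups) and verifies directly that $h$ is a bijection and that $h(c)=\alpha^{u}\beta^{v}(c)$ has a unique solution for every $(u,v)$. In your language, the paper is constructing an explicit orthomorphism of $\Z_{t}\times\Z_{\ell}$ by hand: it partitions the $j$-coordinate into $\ell$ blocks of length $t/\ell$ and on the $s$-th block applies the affine map $(i,j)\mapsto(\pi_s(i),\,(j(\ell+1)+\pi_s(i))\bmod t)$ with $\pi_s(i)=i+s\bmod\ell$. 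What your approach buys is conceptual clarity and a one-line existence argument; what the paper's approach buys is an explicit, self-contained companion without appeal to an external theorem. Your acknowledged ``bottleneck'' in the non-homocyclic case is precisely what the paper's explicit formula resolves, so if you want a constructive proof you could simply verify that the paper's $h$ is an orthomorphism in your coordinates rather than search for a new one.
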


\begin{proof}
Without loss of generality, we can assume that $ \ell \mid t$. Now, let $\underline{\beta}_f=(\beta_0,\ldots,\beta_{q-1})$ be the permutation polynomial tuple of the $e$-Klenian polynomial $f$. Then from Lemma~\ref{JJ_tpl}, we have
\[
\{\beta_0,\ldots,\beta_{q-1}\}=\{\beta^{v}\alpha^{u} \mid 0 \leq u \leq \ell-1, 0 \leq v \leq t-1\},
\]
where $\alpha$ and $\beta$ are as defined  in Lemma \ref{JJ_tpl}. Our aim is to construct a permutation polynomial $h\in \F_q[X]$ such that $h$ intersects $\beta_k$ simply, for all $k \in \{0, \ldots, q-1\}$. Then using Lemma \ref{T43}, we can show that the bivariate polynomial $g$ corresponding to $(h\beta_0,\ldots,h\beta_{q-1})$ is a companion of $f$. For any $ i \in \{0,1,\ldots,\ell-1\}$ and $j \in \{0, \ldots, t-1\}$, we define $h$ as follows:
\[
h(c_{i+j\ell})=
\begin{cases}

       c_{\pi_0(i)+\ell  ((j(\ell+1)+\pi_0(i))\hspace{-.2cm}\mod t)}& \text{if }0 \leq j \leq \dfrac{t}{\ell}-1, \\
       c_{\pi_1(i)+\ell((j(\ell+1)+\pi_1(i))\hspace{-.2cm}\mod t)}& \text{if } \dfrac{t}{\ell}\leq j \leq 2\dfrac{t}{\ell}-1, \\
       \vdots & \vdots\\
       c_{\pi_{\ell-1}(i)+\ell((j(\ell+1)+\pi_{\ell-1}(i))\hspace{-.2cm}\mod t)}&\text{if }  (\ell-1) \dfrac{t}{\ell} \leq j \leq \ell \dfrac{t}{\ell}-1=t-1,   \\
       	\end{cases}
\]
where $\pi_s:\mathbb{Z}_{\ell} \rightarrow \mathbb{Z}_{\ell}$ such that $\pi_s(i)=(i+s) \pmod \ell$, $s \in \{0,1,\ldots,\ell-1\}$ and $\mathbb{Z}_{\ell}$ is the group of integer modulo $\ell$ under addition. 
First, we shall prove that $h$ is a permutation of $\F_q$. On the contrary, we assume that $$h(c_{i_1+j_1\ell})=h(c_{i_2+j_2\ell})$$ for some $(i_1,j_1) \neq (i_2,j_2)$, where $ i_1,i_2 \in \{0,1,\ldots,\ell-1\}$ and $j_1,j_2 \in \{0, \ldots, t-1\}$ which implies $$c_{\pi_{s_1}(i_1)+\ell((j_1(\ell+1)+\pi_{s_1}(i_1))\hspace{-.2cm}\mod t)}=c_{\pi_{s_2}(i_2)+\ell((j_2(\ell+1)+\pi_{s_2}(i_2))\hspace{-.2cm}\mod t)}.$$ However,  it is possible only if the following two conditions hold true
  \[\pi_{s_1}(i_1)=\pi_{s_2}(i_2)\]
   and
    \[j_1(\ell+1)+\pi_{s_1}(i_1) \equiv j_2(\ell+1)+\pi_{s_2}(i_2) \pmod t.\]
    Using these two conditions, we have  
    \[j_1(\ell+1) \equiv j_2(\ell+1) \pmod t\]
     this gives $j_1 \equiv j_2 \pmod t$, which is equivalent to $j_1=j_2$ as $j_1,j_2 \in \{0, \ldots, t-1\}$. The equality of $j_1$ and $j_2$ implies $s_1=s_2$. Now since $s_1=s_2$, we have $i_1 \equiv i_2 \pmod \ell$ which implies $i_1=i_2$. Therefore, we have $(i_1,j_1) = (i_2,j_2)$, which is a contradiction.

We shall now prove that $h(c)=\beta_k(c)$ for exactly one $c\in \F_q$ for all $k \in \{0, \ldots, q-1\} $. Since both $h$ and $\beta_k$'s, where $k \in \{0,1, \ldots, q-1 \}$, are permutations, it is equivalent to show that for each fixed $c \in \F_q$, there exists a unique $k \in \{0,1, \ldots, q-1\}$ such that $h(c)=\beta_k(c)$ and for each fixed $k \in \{0,1, \ldots, q-1\}$, there exists a unique $c \in \F_q$ such that $h(c)=\beta_k(c)$. The first assertion directly follows from the property of the permutation polynomial tuple as for any $k_1, k_2 \in \{0, \ldots, q-1\}$, $k_1 \neq k_2$ the condition $\beta_{k_1}(c)= \beta_{k_2}(c)$ would imply that $c$ is a fixed point of the permutation $\beta_{k_2}^{-1}\beta_{k_1}$, which is not possible as $(\beta_0,\ldots,\beta_{q-1})$ is a permutation polynomial tuple. We shall now show that for each fixed $k \in \{0,1, \ldots, q-1\}$, there exists a unique $c \in \F_q$ such that $h(c)=\beta_k(c)$. Equivalently, for any fixed $u \in \{0,1,\ldots, \ell-1\}$ and $v\in \{0,1,\ldots,t-1\}$, there exists a unique $i \in \{0,1,\ldots, \ell-1\}$ and a unique $j\in \{0,1,\ldots,t-1\}$ such that $a^ub^v(c_{i+\ell j}) =h(c_{i+\ell j})$.
Notice that for any $u \in \{0,1,\ldots, \ell-1\}$ and $v \in \{0, \ldots, t-1\}$, we have
\begin{equation}\label{E44}
   a^u b^v(c_{i+\ell j})=c_{(u+i) \hspace{-.2cm}\pmod \ell +\ell ((v+j)\hspace{-.2cm}\pmod t)} =c_{\pi_u(i) +\ell ((v+j)\hspace{-.2cm}\pmod t)}.   
\end{equation}
We also observe that 
\begin{equation}\label{E43}
h(c_{i+\ell j})=c_{\pi_{s}(i)+\ell((j(\ell+1)+\pi_{s}(i))\hspace{-.2cm}\mod t)}
\end{equation}
 for some $s \in \{0,1,\ldots,\ell-1\}$ if and only if $j \in   \left \{ \dfrac{st}{\ell}, \dfrac{st}{\ell}+1,\ldots, \dfrac{(s+1)t}{\ell}-1\right\}$. Now, we assume that  $a^u b^v(c_{i+\ell j})=h(c_{i+\ell j})$. Then from Equation \eqref{E44} and Equation \eqref{E43}, we have  that $$s=u  (\pi_{s}=\pi_u), j \in   \left \{ \dfrac{ut}{\ell}, \dfrac{ut}{\ell}+1,\ldots, \dfrac{(u+1)t}{\ell}-1\right \},$$ and 
\begin{equation}\label{E42}
 (v+j) \equiv j(\ell+1)+\pi_{u}(i) \pmod t.
\end{equation}
  The Equation \eqref{E42} is equivalent to 
\begin{equation}\label{E41}
 v \equiv j \ell+\pi_{u}(i) \pmod t.
 \end{equation}
Next, we suppose that $v \equiv w \pmod \ell$ for some $w \in \{0,1,\ldots,\ell-1\}$. Then we have $ \pi_{u}(i)=w$ as $v \equiv   \pi_{u}(i) \pmod \ell$, where the latter condition follows from the fact that $\ell \mid t$. Since $i \in \{0,1,\ldots,\ell-1\}$, so the equivalence $i \equiv w-u \pmod \ell $ has a unique solution for $i$. Next, our aim is to determine  $j$ uniquely. From Equation \eqref{E41}, we have  $v-\pi_{u}(i) \equiv j \ell \pmod t$ which implies that  $\frac{v-\pi_{u}(i)}{\ell} \equiv j  \pmod {\dfrac{t}{\ell}}$. Therefore,  the equivalence $\frac{v-\pi_{u}(i)}{\ell} \equiv j  \pmod {\dfrac{t}{\ell}}$ has a unique solution for $j$ as  $j \in   \left \{ \dfrac{ut}{\ell}, \dfrac{ut}{\ell}+1,\ldots, \dfrac{(u+1)t}{\ell}-1\right \}$. Thus, we obtain a unique ordered pair $(i,j)$ for fixed $(u,v)$, which implies that for each fixed $k\in \{0,1, \ldots, q-1\}$, there exists a unique $c \in \F_q$ such that $h(c)=\beta_k(c)$. 
\end{proof}
Next, we prove the following lemma which plays prominent role in the subsequent results.
\begin{lem}\label{L45}
Let $2\leq t \leq q$ be an even integer and $C=(c_{i_0},c_{i_1},\ldots,c_{i_{t-1}})$ be a $t$-cycle. Also, let 
\begin{equation*}
\begin{split}
A&=\left \{C^{t-2}(c_{i_0}),C^{t-4}(c_{i_1}),\ldots,C^{0}(c_{i_{\frac{t-2}{2}}}),C^{t-2}(c_{i_{\frac{t}{2}}}),C^{t-4}(c_{i_{\frac{t+2}{2}}}),\ldots,C^{0}(c_{i_{t-1}})\right \} \text{ and}\\
B&=\left \{C^{0}(c_{i_0}),C^{2}(c_{i_1}),\ldots,C^{t-2}(c_{i_{\frac{t-2}{2}}}),C^{0}(c_{i_{\frac{t}{2}}}),C^{2}(c_{i_{\frac{t+2}{2}}}),\ldots,C^{t-2}(c_{i_{t-1}})\right \}.
\end{split}
\end{equation*}
Then $|A|=|B|=t$. 
\end{lem}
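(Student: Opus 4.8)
The plan is to pass from the symbols $c_{i_0},\dots,c_{i_{t-1}}$ to their indices and to work in the additive group $\mathbb{Z}_t$. Since $C=(c_{i_0},c_{i_1},\dots,c_{i_{t-1}})$ acts by $C^{k}(c_{i_j})=c_{i_{(j+k)\bmod t}}$, identifying $c_{i_j}$ with $j\in\mathbb{Z}_t$ turns each of $A$ and $B$ into a set of residues in $\mathbb{Z}_t$; and since both $A$ and $B$ are contained in the $t$-element set $\{c_{i_0},\dots,c_{i_{t-1}}\}$, proving $|A|=t$ (respectively $|B|=t$) is exactly the assertion that the $t$ residues listed in the definition of $A$ (respectively $B$) are pairwise distinct, i.e.\ run over all of $\mathbb{Z}_t$.

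Next I would note that in the list defining $A$ the arguments are $c_{i_0},c_{i_1},\dots,c_{i_{(t-2)/2}},c_{i_{t/2}},\dots,c_{i_{t-1}}$, which already exhaust $\{c_{i_0},\dots,c_{i_{t-1}}\}$ because $(t-2)/2=t/2-1$; hence for each $m\in\{0,\dots,t-1\}$ there is exactly one term of the list whose argument is $c_{i_m}$. Reading off the displayed pattern, that term is $C^{\,t-2-2m}(c_{i_m})$ when $0\le m\le\tfrac{t-2}{2}$ and $C^{\,2t-2-2m}(c_{i_m})$ when $\tfrac{t}{2}\le m\le t-1$, the split serving only to keep the exponent inside $\{0,\dots,t-1\}$. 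In either range the index of this term reduces to $m+(t-2-2m)\equiv -m-2\pmod t$, respectively $m+(2t-2-2m)\equiv -m-2\pmod t$, so the indices occurring in $A$ are precisely $\{(-m-2)\bmod t:0\le m\le t-1\}$; since $m\mapsto -m-2$ is an affine bijection of $\mathbb{Z}_t$ these are all distinct and $|A|=t$. For $B$ the identical argument applies: the term with argument $c_{i_m}$ is $C^{\,2m}(c_{i_m})$ for $0\le m\le\tfrac{t-2}{2}$ and $C^{\,2m-t}(c_{i_m})$ for $\tfrac{t}{2}\le m\le t-1$, and in both cases its index is $m+2m\equiv 3m\pmod t$; thus the indices occurring in $B$ are $\{3m\bmod t:0\le m\le t-1\}$, which run over all of $\mathbb{Z}_t$ because $\gcd(3,t)=1$ — automatic in every application of the lemma, where $t$ is a power of two — so $|B|=t$.

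I expect the only genuine work to be the bookkeeping in the middle step: one must extract from the two displayed cycle-segments that the exponent attached to $c_{i_m}$ is indeed the stated piecewise-linear function of $m$, and then verify that reduction modulo $t$ collapses the two pieces to the single formula $-m-2$ for $A$ and $3m$ for $B$. A clean way to do this is to check the first entry, a generic interior entry, and the last entry of each half against the claimed formula; once that is done the conclusion is immediate, since an affine map of $\mathbb{Z}_t$, and multiplication of $\mathbb{Z}_t$ by a unit, are bijections.
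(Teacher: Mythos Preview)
Your argument is correct and follows the same underlying computation as the paper: both reduce to tracking the subscript of $c_{i_{(\cdot)}}$ in $\mathbb{Z}_t$ via $C^k(c_{i_j})=c_{i_{(j+k)\bmod t}}$. The paper splits $A=A_1\cup A_2$ along the two halves, checks each half is injective and that the halves are disjoint, and then asserts that $B$ is handled ``using the same argument''; you instead collapse the two halves into the single residue formulas $m\mapsto -m-2$ for $A$ and $m\mapsto 3m$ for $B$, which is tidier and makes the bijectivity transparent.

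One point worth noting: your observation that the $B$ case needs $\gcd(3,t)=1$ is genuinely sharper than the paper. As literally stated (``$t$ an even integer''), the claim $|B|=t$ fails when $3\mid t$ --- e.g.\ for $t=6$ one gets $\{3m\bmod 6:0\le m\le 5\}=\{0,3\}$ --- and the paper's ``same argument'' does not go through there either. You are right that every invocation of the lemma in the paper has $t$ a power of $2$, so the extra hypothesis is harmless in context; but flagging it, as you do, is the correct thing.
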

\begin{proof}
First, we shall prove that $|A|=t$. We observe that $A=A_1\cup A_2$, where $A_1=\{C^{t-2-2s}(c_{i_s}): 0 \leq s \leq \frac{t-2}{2}\}$ and $A_2=\{C^{t-2-2(s-\frac{t}{2})}(c_{i_s}): \frac{t}{2} \leq s \leq t-1\}$. We now show that all the elements of $A_1$ are distinct. Let $C^{t-2-2s_1}\left(c_{i_{s_1}}\right)=C^{t-2-2s_2}\left(c_{i_{s_2}}\right)$ for some $ s_1,s_2 \in \left\{ 0,\ldots, \frac{t-2}{2}\right\}$. Then  $c_{(t-2-s_1)\mod t}=c_{(t-2-s_2)\mod t}$ which is true if and only if $t-2-s_1 \equiv t-2-s_2 \mod t$. Therefore, $s_1 \equiv s_2 \mod t$ implies that $s_1=s_2$ which proves the assertion. Similarly, we can prove that all the elements of $A_2$ are distinct. Now, let $C^{t-2-2s_1}(c_{i_{s_1}})\in A_1$ and $C^{t-2-2(s_2-\frac{t}{2})}(c_{i_{s_2}})\in A_2$ such that $C^{t-2-2s_1}(c_{i_{s_1}})=C^{t-2-2(s_2-\frac{t}{2})}(c_{i_{s_2}})$ for some $ s_1 \in \left\{0,\ldots, \frac{t-2}{2}\right\} $ and $s_2 \in \left\{\frac{t}{2},\ldots, t-1 \right\}$. Then we have $c_{(t-2-s_1)\mod t}=c_{(t-2-s_2-t)\mod t}$, therefore,  $t-2-s_1 \equiv t-2-s_2-t \mod t$ implies that $s_1=s_2$, which is a contradiction. Therefore all the elements of $A$ are distinct. Other one can be proved using the same argument.
\end{proof}
Now, we construct a companion for permutation group polynomial in Theorem \ref{T31}.
 \begin{thm}\label{T46}
 Let $q=4k$, where $k=2^{\ell}$ for some integer $\ell  \geq 2$ and  $f\in \F_q[X,Y]$ be a permutation group polynomial given in Theorem \ref{T31}. Then $f$ has a companion.
 \end{thm}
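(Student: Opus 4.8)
By Lemma~\ref{T43} it suffices to produce a permutation polynomial $h\in\F_q[X]$ that intersects $f$ simply; that is, writing $\underline{\beta}_f=(\beta_0,\ldots,\beta_{q-1})$ for the permutation polynomial tuple of $f$, we must exhibit a permutation $h$ of $\F_q$ for which the equation $h(x)=\beta_i(x)$ has exactly one solution for each $i$. A useful preliminary reduction: since $(\beta_0,\ldots,\beta_{q-1})$ is a permutation polynomial tuple, $\{\beta_0(x),\ldots,\beta_{q-1}(x)\}=\F_q$ for every fixed $x$, so $\sum_{i}\#\{x\in\F_q:h(x)=\beta_i(x)\}=q$; as there are exactly $q$ indices, it is enough to arrange that $h$ meets each $\beta_i$ in \emph{at most} one point, and equality then follows automatically. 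Recall from Theorem~\ref{T31} that here $\{\beta_i\}=\{b^j,\,b^ja:0\le j\le 2k-1\}$ with $a^2=I$, $|b|=2k$, $ab=b^{k+1}a$, and that $b$ is a product of two disjoint $2k$-cycles $C_1,C_2$ that are interchanged, as sets, by $a$.

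The plan is to define $h$ piecewise along $C_1$ and $C_2$. Split the index set of each $2k$-cycle into two halves. On one half of $C_1$ we let $h$ act inside $C_1$, following the index-shift pattern used to build the set $A$ in Lemma~\ref{L45} with $t=2k=q/2$; on the complementary half of $C_1$ we let $h$ send those elements into $C_2$, using the remaining block of that construction transported across via $a$. Symmetrically, on the two halves of $C_2$ the map $h$ either stays inside $C_2$, following the pattern defining $B$ of Lemma~\ref{L45} (suitably shifted so that its realized index shifts lie in the residue classes complementary to those used on $C_1$), or is sent into $C_1$. Because the defining relation is $ab=b^{k+1}a$ rather than $ab=ba$, a fixed power $b^j$ acts on $C_1$ and on $C_2$ by shifts with opposite parity behaviour; playing this against the complementary shift patterns of $h$ forces that, for every single $j$, exactly one of the two cycles contributes exactly one coincidence with $b^j$, and likewise exactly one coincidence with $b^ja$ --- never zero and never two. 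Exactly as in the proof of Theorem~\ref{T45}, the resulting $h$ is then recorded as a finite list of cases in the $c_i$-enumeration.

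With $h$ in hand, two things are to be checked. First, $h$ is a permutation of $\F_q$: the images that stay inside $C_1$, together with the images sent into $C_1$ from $C_2$, are respectively the first and second blocks of the defining list of the set $A$ attached by Lemma~\ref{L45} to the cycle $C_1$, and the assertion $|A|=2k$ of that lemma says precisely that these two blocks partition $C_1$; the statement $|B|=2k$ does the same for $C_2$. Second, for each fixed $j$ one checks that $h(x)=b^j(x)$ and $h(x)=b^ja(x)$ each have at most one solution; by the split just described this reduces, cycle by cycle, to showing that a prescribed residue modulo $2k$ occurs at most once among an explicit list of index shifts --- the same kind of elementary congruence bookkeeping carried out in the proof of Lemma~\ref{L45} and in the proof of Theorem~\ref{T45}. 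Combined with the counting remark above, this shows that $h$ intersects $f$ simply, and Lemma~\ref{T43} then yields that the LPP $g$ associated with $(h\beta_0,\ldots,h\beta_{q-1})$ is a companion of $f$.

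The main obstacle is designing $h$ so that all of these demands hold simultaneously: one has to choose which half of each cycle maps internally and how the complementary half is transported across, so that at once (a) the internal images and the transported images partition each cycle, (b) the internal index shifts tile the residue classes required by the powers $b^j$, and (c) the transported index shifts tile the residue classes required by the elements $b^ja$. This is made delicate by the ``boldface'' irregular entries in the description of $b$ in Theorem~\ref{T31}: because of them the cycle positions do not line up with the enumeration $\F_q=\{c_0,\ldots,c_{q-1}\}$, so the formula for $h$ must carry a handful of exceptional values (in the spirit of Subcases~1.3 and~2.3 there), and each such value has to be verified by hand against every $b^j$ and every $b^ja$.
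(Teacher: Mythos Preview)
Your proposal correctly identifies the overall strategy of the paper's proof: reduce via Lemma~\ref{T43} to constructing a permutation $h$ that intersects $f$ simply, and then invoke Lemma~\ref{L45} to check that $h$ is a bijection. The counting observation you add (that ``at most one'' intersection per $\beta_i$ suffices) is correct and slightly streamlines the logic.

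However, the proposal is a \emph{plan}, not a proof. The entire substance of Theorem~\ref{T46} lies in exhibiting a concrete $h$ and verifying it works, and you do neither. The paper writes down an explicit ten-case formula for $h$, engineered so that the assignment $c_r\mapsto (u_r,v_r)$ with $h(c_r)=b^{v_r}a^{u_r}(c_r)$ is visibly a bijection onto $\{0,1\}\times\{0,\ldots,2k-1\}$; it then decomposes the image $h(\F_q)$ into eight sets $C_1,\ldots,C_4,D_1,\ldots,D_4$ that are matched precisely to the shape of the sets $A,B$ in Lemma~\ref{L45}, yielding bijectivity. You describe this in the abstract (``split each cycle in half, send one half internally and the other across via $a$'') but never write down which halves, which shifts, or which exceptional values --- and you yourself flag this as ``the main obstacle''. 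Without that, nothing has been proved: the difficulty is not in the architecture but in getting the arithmetic of the ten cases to close up simultaneously, and that cannot be waved through.

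A secondary point: your heuristic that ``a fixed power $b^j$ acts on $C_1$ and on $C_2$ by shifts with opposite parity behaviour'' is not accurate as stated. The permutation $b^j$ shifts each of its two $2k$-cycles by $j$ positions; the relation $ab=b^{k+1}a$ governs how $a$ relabels positions between the cycles (conjugation by $a$ sends $b$ to $b^{k+1}$), not how $b^j$ itself acts on them. This does not invalidate the strategy, but it does mean the informal justification you give for why the pieces fit is not the right one, and the actual verification (as in the paper) proceeds by direct inspection of the explicit index formulas rather than by a parity argument.
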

 \begin{proof}
  Recall that the permutations in the permutation polynomial tuple $\underline{\beta}_f$ of $f$ are of the form $b^ja^i$, for some $i \in \{0,1 \}$ and $j \in \{0,1, \ldots, 2k-1\}$, where
\[
a=(c_0,c_1)(c_2,c_3)\cdots(c_{4k-2},c_{4k-1}),
\]
and
\begin{equation*}
    \begin{split}
        b=& (c_0, c_{2k-2}, \ldots, c_{2m}, c_{2(2m+k-1)}, \ldots , c_{k-2}, c_{4k-6},c_k,c_{2k}, \ldots, c_{2n+k},c_{2(2n+k)}, \ldots, c_{2k-4}, c_{4k-8},\\& {\bf c_{4k-1}, c_{4k-4}}) (c_1, c_{2k+1}, \ldots, c_{2m+1}, c_{2(2m+k)+1}, \ldots , c_{k-1}, c_{4k-3},c_{k+1},c_{2k-1}, \ldots, c_{2n+k+1},\\&c_{2(2n+k)-1}, \ldots, c_{2k-3}, c_{4k-9}, {\bf c_{4k-2}, c_{4k-5}}).
    \end{split}
\end{equation*}
Without loss of generality, we can assume that $\underline{\beta}_f=(I,b^{1},\ldots,b^{2k-1},a,b^{1}a,\ldots,b^{2k-1}a)$. Let $h: \F_q \rightarrow \F_q$ be a function defined as follows:
\[
    \begin{cases}
       h(c_{2i})= b^{4i}(c_{2i}) & \text{ if } 0 \leq i \leq \frac{k-2}{2}, \\
       h(c_{2i+1})= b^{4i+1}(c_{2i+1}) & \text{ if } 0 \leq i \leq \frac{k-2}{2}, \\
       h(c_{2i})= b^{4i-(2k-1)}a(c_{2i}) & \text{ if } \frac{k}{2}  \leq i \leq k-2, \\
       h(c_{2i+1})= b^{4i-2k}a(c_{2i+1}) & \text{ if } \frac{k}{2}  \leq i \leq k-2, \\
       h(c_{4i-2})= b^{4i-(2k-2)}(c_{4i-2}) & \text{ if } \frac{k}{2}  \leq i \leq k-1, \\
        h(c_{4i-1})= b^{4i-(2k-3)}(c_{4i-1}) & \text{ if } \frac{k}{2}  \leq i \leq k-1, \\
        h(c_{4i})= b^{4i-(2k-3)}a(c_{4i}) & \text{ if } \frac{k}{2}  \leq i \leq k-1, \\
        h(c_{4i+1})= b^{4i-(2k-2)}a(c_{4i+1}) & \text{ if } \frac{k}{2}  \leq i \leq k-1, \\
        h(c_{4k-1})= b^{2k-3}a(c_{4k-1}),\\
         h(c_{4k-2})= b^{2k-4}a(c_{4k-2}).\\
	\end{cases}
\]
Our aim is to show that $h$ is a permutation and it intersects the permutation $b^va^u$ simply, for all $u \in \{0,1\}$ and $v \in \{0,1, \ldots, 2k-1\}$. From the property of the permutation polynomial tuple $\underline{\beta}_f$, we know that for any $u_1,u_2 \in \{0,1\}$ and $v_1,v_2 \in \{0,1, \ldots, 2k-1\}$, $(u_1,u_2,v_1,v_2) \neq (0,0,0,0)$, if $b^{v_1}a^{u_1}(c_i)=b^{v_2}a^{u_2}(c_i)=h(c_i)$ for some $i \in \{0,1,\ldots, 4k-1 \}$ then the first equality would imply that $(u_1,v_1)= (u_2, v_2)$. Also, it is easy to see from the definition of the function $h$ that for any fixed  $u \in \{0,1\}$ and $v \in \{0,1, \ldots, 2k-1\}$ there exists a unique $i \in \{0,1,\ldots, 4k-1 \}$ such that $b^va^u(c_i)=h(c_i)$. Thus, it only remains to show that $h$ is a permutation. By definition of $h$, we have
\[
    \begin{cases}
       h(c_{2i})= b^{4i}(c_{2i}) & \text{ if } 0 \leq i \leq \frac{k-2}{2}, \\
       h(c_{2i+1})= b^{4i+1}(c_{2i+1}) & \text{ if } 0 \leq i \leq \frac{k-2}{2}, \\
       h(c_{2i})= b^{4i-(2k-1)}(c_{2i+1}) & \text{ if } \frac{k}{2}  \leq i \leq k-2, \\
       h(c_{2i+1})= b^{4i-2k}(c_{2i}) & \text{ if } \frac{k}{2}  \leq i \leq k-2, \\
       h(c_{4i-2})= b^{4i-(2k-2)}(c_{4i-2}) & \text{ if } \frac{k}{2}  \leq i \leq k-1, \\
        h(c_{4i-1})= b^{4i-(2k-3)}(c_{4i-1}) & \text{ if } \frac{k}{2}  \leq i \leq k-1, \\
        h(c_{4i})= b^{4i-(2k-3)}(c_{4i+1}) & \text{ if } \frac{k}{2}  \leq i \leq k-1, \\
        h(c_{4i+1})= b^{4i-(2k-2)}(c_{4i}) & \text{ if } \frac{k}{2}  \leq i \leq k-1, \\
        h(c_{4k-1})= b^{2k-3}(c_{4k-2}),\\
         h(c_{4k-2})= b^{2k-4}(c_{4k-1}).\\
	\end{cases}
\]

Therefore, we can write $h(\F_q)=C \cup D$, where $C=C_1 \cup C_2 \cup C_3 \cup C_4, D=D_1 \cup D_2 \cup D_3 \cup D_4$,  

\begin{equation*}
 \begin{split}
C_1&=\left \{b^{4i}(c_{2i}) \mid 0 \leq i \leq \frac{k-2}{2}\right \}=\left \{c_0,b^{4}(c_{2}),\ldots,b^{2k-4}(c_{k-2})\right \},\\
C_2&=\left \{b^{4i-2k}(c_{2i}) \mid  \frac{k}{2}  \leq i \leq k-2\right \} \cup \left \{b^{2k-4}(c_{4k-1})\right \}\\&=\left \{c_k,b^{4}
(c_{k+2}),\ldots,b^{2k-8}(c_{2k-4}),b^{2k-4}(c_{4k-1})\right\},\\
C_3&=\left \{b^{4i-(2k-2)}(c_{4i-2}) \mid \frac{k}{2}  \leq i \leq k-1 \right \}=\left \{b^{2}(c_{2k-2}),b^{6}(c_{2k+2}), \ldots, b^{2k-2}(c_{4k-6})\right\},
\end{split}
\end{equation*}

\begin{equation*}
 \begin{split}
C_4&=\left \{b^{4i-(2k-2)}(c_{4i}) \mid \frac{k}{2}  \leq i \leq k-1 \right \}=\left \{b^{2}(c_{2k}),b^{6}(c_{2k+4}), \ldots, b^{2k-2}(c_{4k-4})\right\},\\
D_1&=\left \{b^{4i+1}(c_{2i+1}) \mid 0 \leq i \leq \frac{k-2}{2}\right \}=\left \{b(c_1),b^{5}(c_{3}),\ldots,b^{2k-3}(c_{k-1})\right \},\\
D_2&=\left \{b^{4i-2k+1}(c_{2i+1}) \mid  \frac{k}{2}  \leq i \leq k-2\right \} \cup \left \{b^{2k-3}(c_{4k-2})\right \}\\&=\left \{b(c_{k+1}),b^{5}
(c_{k+3}),\ldots,b^{2k-7}(c_{2k-3}), b^{2k-3}(c_{4k-2})\right\},\\
D_3&=\left \{b^{4i-(2k-3)}(c_{4i-1}) \mid \frac{k}{2}  \leq i \leq k-1 \right \}=\left \{b^{3}(c_{2k-1}),b^{7}(c_{2k+3}), \ldots, b^{2k-1}(c_{4k-5})\right\}, \text{ and}\\
D_4&=\left \{b^{4i-(2k-3)}(c_{4i+1}) \mid \frac{k}{2}  \leq i \leq k-1 \right \}=\left \{b^{3}(c_{2k+1}),b^{7}(c_{2k+5}), \ldots, b^{2k-1}(c_{4k-3})\right\}.
\end{split}
\end{equation*}
It can be observed that the images of $h$ are of the form $b^{t_1}(c_i)$ for some  $t_1 \in \{0,1,\ldots,2k-1\}$ and $i \in \{0,1,\ldots,q-1\}$. So, it is easy to see that $C \cap D=\phi$ as input for $b^{t_1}$ in the sets $C$ and $D$ are from two different disjoint cycles of $b$. Thus, in order to show that $h$ is a permutation of $\F_q$, equivalently, $C \cup D= \F_q$, it only remains to show that all the elements of the sets $C $ and $ D$ are distinct which follows from Lemma \ref{L45}. Now using Lemma \ref{T43}, the bivariate LPP $g$ corresponding to the permutation polynomial tuple $\underline{\beta}_g=(h,hb^{1},\ldots,hb^{2k-1},ha,hb^{1}a,\ldots,hb^{2k-1}a)$ is a companion of $f$.
\end{proof}
The following theorem gives a companion for the permutation group polynomials proposed in Theorem \ref{T32}. 
 \begin{thm}\label{T47}
 Let $q=2t$, where $t=2^m$ and $m \geq 2$ is a positive integer and  $f\in \F_q[X,Y]$ be a permutation group polynomial given in Theorem \ref{T32}. Then $f$ has a  companion.
 \end{thm}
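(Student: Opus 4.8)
The plan is to follow the template of Theorem~\ref{T46}: produce an explicit univariate permutation polynomial $h \in \F_q[X]$ that intersects $f$ simply, and then invoke Lemma~\ref{T43} to conclude that the bivariate polynomial $g$ associated to $\underline{\beta}_g = (h\beta_0, \ldots, h\beta_{q-1})$ is a companion of $f$. First I would recall that, by Theorem~\ref{T32}, the permutation polynomial tuple of $f$ may be taken to be $\underline{\beta}_f = (I, b, \ldots, b^{q/2-1}, a, ba, \ldots, b^{q/2-1}a)$, where $a$ is the fixed-point-free involution $(c_0,c_1)(c_2,c_3)\cdots(c_{q-2},c_{q-1})$ and $b$ is a product of two disjoint $\tfrac{q}{2}$-cycles of order $\tfrac{q}{2}$, as in Theorem~\ref{T32}. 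So the problem reduces to two assertions about a suitable candidate $h$: (i) for every pair $(u,v)$ with $u \in \{0,1\}$ and $v \in \{0,\ldots,\tfrac{q}{2}-1\}$ there is exactly one $c \in \F_q$ with $h(c)=b^v a^u(c)$; and (ii) $h$ is itself a permutation of $\F_q$.

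For (i), note that for a \emph{fixed} $c$ the pair $(u,v)$ with $h(c)=b^v a^u(c)$ is automatically unique, since $\underline{\beta}_f$ is a permutation polynomial tuple and two distinct members of it agreeing at $c$ would force $c$ to be a fixed point of a non-identity element of $G_2$. Hence it suffices to define $h$ piecewise on $\{c_0,\ldots,c_{q-1}\}$ via a bijective assignment $i \mapsto (u(i),v(i)) \in \{0,1\}\times\{0,\ldots,\tfrac{q}{2}-1\}$, setting $h(c_i) = b^{v(i)}a^{u(i)}(c_i)$; then each pair $(u,v)$ is "used up" exactly once, which gives (i). Concretely, I would write this assignment out explicitly as in Theorem~\ref{T46}, partitioning the index set according to the parity of $i$ and according to whether $c_i$ lies in the first or the second half of the two cycles of $b$, and choosing the exponents $v(i)$ to run through arithmetic progressions with common difference $4$ (treating the few irregular, boldface endpoints of the cycles of $b$ as separate cases), so that after applying $b$ the images redistribute evenly across $\F_q$.

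The verification of (ii) would then be reduced, exactly as in Theorem~\ref{T46}, to showing that a list of elements of the form $b^{t}(c_j)$, obtained by applying successively larger powers of $b$ to consecutive entries of a cycle of $b$, consists of pairwise distinct elements. This is precisely the content of Lemma~\ref{L45}, applied separately to each of the two $\tfrac{q}{2}$-cycles of $b$ (with $t = \tfrac{q}{2} = 2^m$, which is even), and using that these two cycles are disjoint so that the two resulting blocks of images cannot collide; writing $h(\F_q)=C\cup D$ for the two blocks, one gets $C\cap D=\emptyset$ because the inputs fed to powers of $b$ in $C$ and in $D$ come from different cycles of $b$, and $|C\cup D|=q$ by Lemma~\ref{L45}. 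Once $h$ is shown to be a permutation intersecting $f$ simply, Lemma~\ref{T43} finishes the proof.

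The main obstacle is purely combinatorial bookkeeping: the piecewise formula for $h$ must be chosen so that \emph{simultaneously} the exponent pattern on each cycle of $b$ matches the hypotheses of Lemma~\ref{L45}, and the induced map $i\mapsto(u(i),v(i))$ is genuinely a bijection onto $\{0,1\}\times\{0,\ldots,\tfrac{q}{2}-1\}$. There is no new conceptual ingredient beyond Theorem~\ref{T46}; the delicate point is getting the index arithmetic to close up modulo $\tfrac{q}{2}$ for this particular $b$ and disposing of the handful of boundary cases coming from the irregular endpoints of the two cycles of $b$.
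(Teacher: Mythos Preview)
Your plan is correct and matches the paper's approach exactly: construct an explicit $h$ via a bijective assignment $i\mapsto(u(i),v(i))$, verify that $h$ is a permutation using Lemma~\ref{L45} on each of the two disjoint $\tfrac{q}{2}$-cycles of $b$, and finish with Lemma~\ref{T43}. Two small corrections: the cycle $b$ of Theorem~\ref{T32} has \emph{no} irregular boldface endpoints (unlike those of Theorems~\ref{T31} and~\ref{T33}), so no separate boundary cases arise; and the paper's explicit $h$ is simpler than the step-$4$ pattern you anticipate---it takes $h(c_i)=b^{i}(c_i)$ for $0\le i\le t-1$ and $h(c_i)=b^{2t-i-1}a(c_i)$ for $t\le i\le 2t-1$, so that after splitting by parity the exponents run in step~$2$ and feed directly into Lemma~\ref{L45}.
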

 \begin{proof} Let $f \in \F_q[X,Y]$ be a permutation group polynomial in  Theorem \ref{T32} and $\underline{\beta}_f$ be its corresponding permutation polynomial tuple. Without loss of generality,  we  can  write  $\underline{\beta}_f$ of the form $(I,b^{1},\ldots,b^{t-1},a,b^{1}a,\ldots,b^{t-1}a)$, where
\[
a=(c_0,c_1)(c_2,c_3)\cdots(c_{2t-2},c_{2t-1}),
\]
and
\begin{equation*}
    \begin{split}
        b= &(c_{t-2},\ldots,c_{t-2(m+1)}, \ldots,c_{0},  c_t,\ldots, c_{t+2m},\ldots,c_{2t-2})\\
        &(c_{1},\ldots, c_{2m+1}, \ldots,c_{t-1},c_{2t-1},\ldots, c_{2t-(2m+1)}, \ldots,c_{t+1}).
    \end{split}
\end{equation*}

Now, we shall construct a permutation polynomial  $h$ from $\F_q$ to itself such that it intersects the permutation $b^va^u$ simply, for all $u \in \{0,1\}$ and $v \in \{0,1, \ldots, t-1\}$. Then using Lemma \ref{T43}, the bivariate LPP $g$ corresponding to the permutation polynomial tuple $\underline{\gamma}_g=(h,hb^{1},\ldots,hb^{t-1},ha,hb^{1}a,\ldots,hb^{t-1}a)$ will be a companion of $f$.  Let $h: \F_q \rightarrow \F_q$ be defined as follows:
\[
h(c_{i})=
\begin{cases}
	b^{i}(c_{i})~&~\mbox{if}~0 \leq i \leq t-1,\\
    b^{2t-i-1}a(c_{i})~&~\mbox{if}~t \leq i \leq 2t-1.
\end{cases}
\]
Thus, it only remains to show  that $h$ is a permutation of $\F_q$ and it will  intersect each permutation $b^va^u$ simply, where $u \in \{0,1\}$ and $v \in \{0,1, \ldots, t-1\}$. Since $\underline{\beta}_f$ is a permutation polynomial tuple, therefore for any $u_1,u_2 \in \{0,1\}$ and $v_1,v_2 \in \{0,1, \ldots, t-1\}$, $(u_1,u_2,v_1,v_2) \neq (0,0,0,0)$, if  $b^{v_1}a^{u_1}(c_i)=b^{v_2}a^{u_2}(c_i)=h(c_i)$ for some $i \in \{0,1,\ldots, 2t-1 \}$ then we have $(u_1,v_1)= (u_2, v_2)$. Also, it is easy to see from the definition of the function $h$ that for any fixed  $u \in \{0,1\}$ and $v \in \{0,1, \ldots, t-1\}$ there exists a unique $i \in \{0,1,\ldots, 2t-1 \}$ such that $b^va^u(c_i)=h(c_i)$. Next, we shall prove that $h$ is a permutation of $\F_q$. We observe that
\[
\begin{cases}
	h(c_{2j})= b^{2j}(c_{2j})~&~\mbox{if}~0\leq  j \leq \frac{t}{2}-1,\\
    h(c_{2j})= b^{2t-2j-1}(c_{2j+1})~&~\mbox{if}~ \frac{t}{2} \leq j \leq t-1,\\
    h(c_{1+2j})= b^{1+2j}(c_{1+2j})~&~\mbox{if}~0\leq j \leq \frac{t}{2}-1,\\
    h(c_{1+2j})= b^{2t-2j-2}(c_{2j})~&~\mbox{if}~\frac{t}{2} \leq j \leq t-1.
\end{cases}
\]
Thus, $h(\F_q)=C_1 \cup C_2 \cup D_1 \cup D_2$, where
\begin{equation*}
    \begin{split}
        C_1 &= \{c_0, b^2(c_2), \ldots, b^{t-2}(c_{t-2})\},\\
        C_2 &= \{b(c_1), b^3(c_3), \ldots, b^{t-1}(c_{t-1})\},\\
        D_1 &= \{b^{t-1}(c_{t+1}), b^{t-3}(c_{t+3}), \ldots, b(c_{2t-1})\},\\
        D_2 &= \{b^{t-2}(c_t), b^{t-4}(c_{t+2}), \ldots, c_{2t-2}\}.\\
    \end{split}
\end{equation*}
It is easy to see that $C_1 \cap C_2 = \phi$ and $D_1 \cap D_2 = \phi$. Similarly, $C_1 \cap D_1 = \phi$ and $C_2 \cap D_2 = \phi$. Therefore, in order to show that $C_1 \cup C_2 \cup D_1 \cup D_2= \F_q$, it is enough to prove  that all the elements of the sets $C_1 \cup D_2$ and $C_2 \cup D_1$ are distinct, which follows from Lemma~\ref{L45}.
\end{proof}

The following theorem provides a companion for permutation group polynomials obtained in  Theorem \ref{T33}.

\begin{thm}\label{T48}
 Let $q=4k$, where $k=2^{\ell}$ for some integer $\ell  \geq 2$ and  $f\in \F_q[X,Y]$ be an permutation group polynomial given in Theorem \ref{T33}. Then $f$ has a companion.
 \end{thm}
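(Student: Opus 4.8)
The plan is to mirror the strategy already used for Theorems~\ref{T46} and~\ref{T47}: invoke Lemma~\ref{T43}, which reduces the problem to exhibiting a single univariate permutation polynomial $h \in \F_q[X]$ that intersects $f$ simply, i.e., intersects every component $\beta_k = b^v a^u$ of the permutation polynomial tuple $\underline{\beta}_f$ simply. Since $f$ is a permutation group polynomial (by Theorem~\ref{T33}, with $\langle a,b \mid |a|=2,\ |b|=2k,\ ab=b^{k-1}a\rangle$ of order $q=4k$), once such an $h$ is found, the bivariate LPP $g$ associated to $\underline{\gamma}_g = (h, hb, \ldots, hb^{2k-1}, ha, hba, \ldots, hb^{2k-1}a)$ is automatically a companion of $f$. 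So the entire content of the proof is the explicit construction of $h$ and the verification of its two required properties: that $h$ is a bijection of $\F_q$, and that for each fixed pair $(u,v)$ with $u\in\{0,1\}$, $v\in\{0,\ldots,2k-1\}$ there is exactly one $c\in\F_q$ with $h(c)=b^v a^u(c)$.

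First I would write $\underline{\beta}_f = (I, b, \ldots, b^{2k-1}, a, ba, \ldots, b^{2k-1}a)$ and define $h$ piecewise on the indices $\{0,1,\ldots,4k-1\}$, following the template of Theorem~\ref{T46}: on roughly the first half of the even/odd indices set $h(c_i) = b^{(\text{linear in }i)}(c_i)$, and on the second half set $h(c_i) = b^{(\text{linear in }i)}a(c_i)$, with a few exceptional indices (the boldface entries $c_k, c_{k+1}, c_{2k}, c_{3k-1}, c_{3k-2}, \ldots$ in the cycle description of $b$) handled by hand. The exponents of $b$ should be chosen so that the two properties below fall out cleanly. The simple-intersection property will then be essentially bookkeeping: the "first-direction" uniqueness (for each $c$, a unique $k$) is free from the permutation-polynomial-tuple property of $\underline{\beta}_f$ (any coincidence $\beta_{k_1}(c)=\beta_{k_2}(c)$ forces a fixed point of $\beta_{k_2}^{-1}\beta_{k_1}$, impossible by Theorem~\ref{T33}); the "second-direction" uniqueness (for each $(u,v)$, a unique $c$) is arranged by construction, since the piecewise definition assigns to each exponent-coset pair $b^v a^u$ exactly one index $i$ at which $h(c_i)=b^va^u(c_i)$.

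The remaining — and genuinely the main — obstacle is proving that $h$ is a permutation of $\F_q$. Exactly as in Theorems~\ref{T46} and~\ref{T47}, I would rewrite each branch $h(c_i)=b^e a(c_i)$ using the relation $ab=b^{k-1}a$ (equivalently $a b^{e} = b^{(k-1)e \bmod 2k} a$, and $a(c_i)=c_{i\pm 1}$) to convert everything into the form $h(c_i) = b^{t_i}(c_{\sigma(i)})$ for suitable exponents $t_i \in \{0,\ldots,2k-1\}$ and a reindexing $\sigma$. Then $h(\F_q)$ splits as $C \cup D$ according to which of the two disjoint $2k$-cycles of $b$ the argument $c_{\sigma(i)}$ lies in; since the two cycles are disjoint, $C \cap D = \emptyset$ automatically, so it suffices to show $|C| = |D| = 2k$. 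Here I would group the elements of $C$ (and of $D$) into sub-blocks of the shape $\{C^{t-2-2s}(c_{i_s})\}$ and $\{C^{2s}(c_{i_s})\}$ appearing in Lemma~\ref{L45} — this is precisely why that lemma was isolated — and conclude distinctness of all $2k$ images in each of $C$ and $D$ from Lemma~\ref{L45}. The delicate points I anticipate are: getting the exponent arithmetic modulo $2k$ right in each of the four index regions (even vs.\ odd, first half vs.\ second half) so that after the $ab=b^{k-1}a$ rewrite the exponents line up into arithmetic progressions of common difference matching Lemma~\ref{L45}, and correctly slotting in the handful of exceptional indices ($c_k, c_{k+1}, c_{2k}, c_{3k-1}, c_{3k-2}$, $c_1$, $c_{4k-1}$, $c_{4k-2}$, etc.) so they neither collide with the generic blocks nor with each other. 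Once $h$ is shown to be a bijection, Lemma~\ref{T43} finishes the proof.
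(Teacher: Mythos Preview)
Your plan is correct and matches the paper's proof essentially line for line: the paper also writes $\underline{\beta}_f=(I,b,\ldots,b^{2k-1},a,ba,\ldots,b^{2k-1}a)$, defines $h$ piecewise (with only two exceptional indices, $c_k$ and $c_{k+1}$, rather than the longer list you anticipate), verifies simple intersection exactly as you describe, and proves bijectivity by rewriting $b^e a(c_i)=b^e(c_{i\pm 1})$ (note: only the explicit action of $a$ is needed here, not the relation $ab=b^{k-1}a$), splitting $h(\F_q)=C\cup D$ along the two $2k$-cycles of $b$, and invoking Lemma~\ref{L45}.
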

 \begin{proof}
 Let $\underline{\beta}_f$ be the corresponding permutation polynomial tuple of the permutation group polynomial $f$ in Theorem \ref{T33}. Without loss of generality, we can express $\underline{\beta}_f$ as follows:
 \[
 \underline{\beta}_f=(I,b^{1},\ldots,b^{t-1},a,b^{1}a,\ldots,b^{t-1}a), 
 \]
 where 
\[
a=(c_0,c_1)(c_2,c_3)\cdots(c_{4k-2},c_{4k-1}),
\]
and
\begin{equation*}
    \begin{split}
        b=& (c_0, c_{2k+1}, \ldots, c_{2i}, c_{2i+2k+1}, \ldots , c_{k-2}, c_{3k-1},{\bf c_{k+1},c_{3k}},c_{k+2},c_{3k+2}, \ldots, c_{2(j+1)+k},c_{2(j+1)+3k}, \\& \ldots, c_{2k-2}, c_{4k-2}) (c_{3k-2},c_{2k-1}, \ldots, c_{3k-2-2j}, c_{2k-1-2j}, \ldots ,c_{2k+2},c_{k+3},{\bf c_{2k},c_{k}, }  c_{4k-1}, c_{k-1},\ldots, \\&, c_{4k-1-2i},c_{k-1-2i}, \ldots, c_{3k+1},c_{1}). 
    \end{split}
\end{equation*}

Now, our aim is to  construct a permutation function $h$ from $\F_q$ to itself which intersects $\underline{\beta}_f$ simply. Then from Lemma \ref{T43}, $g$ is a companion of $f$, where $g$ is the bivariate LPP associated to the permutation polynomial tuple $(h,hb^{1},\ldots,hb^{t-1},ha,hb^{1}a,\ldots,hb^{t-1}a)$. Next,  let $h: \F_q \rightarrow \F_q$ be a function defined as follows:

\[
    \begin{cases}
       h(c_{2i})= b^{4i}(c_{2i})& \text{ if } 0 \leq i \leq \frac{k-2}{2}, \\
       h(c_{2i+1})= b^{4i+1}(c_{2i+1})& \text{ if } 0 \leq i \leq \frac{k-2}{2}, \\
       h(c_{2i})= b^{4i-(2k-1)}a(c_{2i})&  \text{ if }  \frac{k+2}{2}  \leq i \leq k-1, \\
       h(c_{2i+1})= b^{4i-2k}a(c_{2i+1})&  \text{ if }  \frac{k+2}{2}  \leq i \leq k-1, \\

       h(c_{2i})= b^{4(i-k)+3}(c_{2i})&  \text{ if }  k  \leq i \leq \frac{3k}{2}-1, \\
        h(c_{2i+1})= b^{4(i-k)+2}(c_{2i+1})&  \text{ if }  k  \leq i \leq \frac{3k}{2}-1, \\
        h(c_{2i})= b^{4(i-\frac{3k}{2})+3}a(c_{2i})&  \text{ if }  \frac{3k}{2}  \leq i \leq 2k-1, \\
        h(c_{2i+1})= b^{4(i-\frac{3k}{2})+2}a(c_{2i+1})&  \text{ if }  \frac{3k}{2}  \leq i \leq 2k-1, \\ 
        h(c_k)= b^{0}a(c_k),\\
         h(c_{k+1})= b^{1}a(c_{k+1}).\\
	\end{cases}
\]
 Therefore, from the definition of permutation polynomial tuple and the definition of the function $h$,  we have that for any fixed  $u \in \{0,1\}$ and $v \in \{0,1, \ldots, t-1\}$ there exists a unique and distinct $i \in \{0,1,\ldots, 4k-1 \}$ such that $b^va^u(c_i)=h(c_i)$. Thus, it only remains to prove that $h$ is a permutation of $\F_q$.  Notice that the image set of $h$ can be written as follows:
 \[
h(\F_q)=
    \begin{cases}
       h(c_{2i})= b^{4i}(c_{2i}) &  \text{ if }  0 \leq i \leq \frac{k-2}{2}, \\
       h(c_{2i+1})= b^{4i+1}(c_{2i+1})&   \text{ if }  0 \leq i \leq \frac{k-2}{2}, \\
       h(c_{2i})= b^{4i-(2k-1)}(c_{2i+1})&  \text{ if }  \frac{k+2}{2}  \leq i \leq k-1, \\
       h(c_{2i+1})= b^{4i-2k}(c_{2i}) &  \text{ if }  \frac{k+2}{2}  \leq i \leq k-1, \\

       h(c_{2i})= b^{4(i-k)+3}(c_{2i}) &  \text{ if }  k  \leq i \leq \frac{3k}{2}-1, \\
        h(c_{2i+1})= b^{4(i-k)+2}(c_{2i+1}) &  \text{ if }  k  \leq i \leq \frac{3k}{2}-1, \\
        h(c_{2i})= b^{4(i-\frac{3k}{2})+3}(c_{2i+1}) &  \text{ if }  \frac{3k}{2}  \leq i \leq 2k-1, \\
        h(c_{2i+1})= b^{4(i-\frac{3k}{2})+2}(c_{2i})&  \text{ if }  \frac{3k}{2}  \leq i \leq 2k-1, \\ 
        h(c_k)= b^{0}(c_{k+1}),\\
         h(c_{k+1})= b^{1}(c_{k}).\\
	\end{cases}
\]
So, $h(\F_q)=C \cup D$, where $C=C_1 \cup C_2 \cup C_3 \cup C_4, D=D_1 \cup D_2 \cup D_3 \cup D_4$, 

\begin{equation*}
 \begin{split}
C_1&=\left \{b^{4i}(c_{2i}) \mid 0 \leq i \leq \frac{k-2}{2}\right \}=\left \{c_0,b^{4}(c_{2}),\ldots,b^{2k-4}(c_{k-2})\right \},\\
C_2&=\left \{ b^{4i-2k}(c_{2i}) \mid \frac{k+2}{2}  \leq i \leq k-1\right \} \cup \left \{b^{0}(c_{k+1})\right \}=\left \{c_{k+1},b^{4}
(c_{k+2}),\ldots,b^{2k-4}(c_{2k-2})\right\},\\
C_3&=\left \{b^{4(i-k)+2}(c_{2i+1}) \mid  k  \leq i \leq \frac{3k}{2}-1 \right \}=\left \{b^{2}(c_{2k+1}),b^{6}(c_{2k+3}), \ldots, b^{2k-2}(c_{3k-1})\right\},\\
C_4&=\left \{b^{4(i-\frac{3k}{2})+2}(c_{2i}) \mid  \frac{3k}{2}  \leq i \leq 2k-1 \right \}=\left \{b^{2}(c_{3k}),b^{6}(c_{3k+2}), \ldots, b^{2k-2}(c_{4k-2})\right\} \\
D_1&=\left \{b^{4i+1}(c_{2i+1}) \mid 0 \leq i \leq \frac{k-2}{2}\right \}=\left \{b(c_1),b^{5}(c_{3}),\ldots, b^{2k-3}(c_{k-1})\right \},\\
D_2&=\left \{  b^{4i-(2k-1)}(c_{2i+1}) \mid  \frac{k+2}{2}  \leq i \leq k-1 \right \} \cup \left \{b^{1}(c_{k})\right \} \\
&=\left \{b(c_{k}),b^{5} (c_{k+3}),\ldots,b^{2k-7}(c_{2k-3}),b^{2k-3}(c_{2k-1})\right\},\\
D_3&=\left \{b^{4(i-\frac{3k}{2})+3}(c_{2i+1}) \mid \frac{3k}{2}  \leq i \leq 2k-1 \right \}=\left \{b^{3}(c_{3k+1}),b^{7}(c_{3k+3}), \ldots, b^{2k-1}(c_{4k-1})\right\}, \text{ and}\\
D_4&=\left \{ b^{4(i-k)+3}(c_{2i}) \mid k  \leq i \leq \frac{3k}{2}-1 \right \}=\left \{b^{3}(c_{2k}),b^{7}(c_{2k+2}), \ldots, b^{2k-1}(c_{3k-2})\right\}.
\end{split}
\end{equation*}
Now, one can  see that every element of $h(\F_q)$ can be written as $b^{t_1}(c_i)$ for some  $t_1 \in \{0,1,\ldots,2k-1\}$ and $i \in \{0,1,\ldots,q-1\}$. Using the similar argument as in Theorem~\ref{T46} and using Lemma \ref{L45}, we observe that $C \cap D=\phi$ and $h(\F_q)=C \cup D=\F_q$.
\end{proof}
\section{Conclusion}\label{S5}
We have constructed three families of bivariate permutation group polynomials over finite fields with even characteristic. These permutation group polynomials are inequivalent both to the known family of permutation group polynomials and to each other. We also investigated the existence of companions for $e$- Klenian polynomials over finite fields with even characteristic, building on a recent result by Gutierrez and Urroz, who demonstrated the existence of a companion for $e$- Klenian polynomials when the characteristic of the finite field is odd. Moreover, we provided companions for each of the new families of LPPs we introduced. These companions can be used to construct complete set of MOLS. It would be an interesting problem to determine whether these complete sets of MOLS are distinct from those known in the literature.
\section*{Acknowledgements}
We would like to sincerely thank Mohit Pal for his careful reading of the initial draft and for his many valuable discussions.

\end{document}